\newtheorem{theorem}{Theorem}[section]
\newtheorem{lemma}{Lemma}[section]
\newtheorem{proposition}{Proposition}[section]
\newtheorem{corollary}{Corollary}[section]
\def\Z{\mathbb{Z}}
\DeclareMathOperator{\GL}{GL}
\DeclareMathOperator{\N}{\mathbb N}
\def\modd#1 #2{#1\ \mbox{\rm (mod}\ #2\mbox{\rm )}}
\DeclareMathOperator{\sgn}{sgn}
\newcommand{\genlegendre}[4]{%
  \genfrac{(}{)}{}{#1}{#3}{#4}%
  \if\relax\detokenize{#2}\relax\else_{\!#2}\fi
}
\newcommand{\legendre}[3][]{\genlegendre{}{#1}{#2}{#3}}
\title{Christoffel matrices and Sturmian determinants}
\author[C. Reutenauer]{Christophe Reutenauer}
\address{Christophe Reutenauer,
D\'epartement de math\'ematiques, Universit\'e du Qu\'ebec \`a Montr\'eal}
\email{Reutenauer.Christophe@uqam.ca}
\author[J. Shallit]{Jeffrey Shallit}
\address{Jeffrey Shallit, School of Computer Science, University of Waterloo, Waterloo, Ontario N2L 3G1, Canada}
\email{shallit@uwaterloo.ca}
\date{\today}
\begin{document}

\begin{abstract} 
We discuss certain matrices associated with Christoffel words, and show that they have a group structure.   We compute their determinants and show a relationship with the Zolotareff symbol from number theory.
   \end{abstract}

\maketitle

\tableofcontents

\section{Introduction}

Christoffel words, and their cyclic conjugates, the standard words, are a finitary version of Sturmian sequences. The 
Burrows-Wheeler matrix of such a word, that is, the square tableau whose rows are all the conjugates of such a word, 
lexicographically sorted, is called a Christoffel matrix when the letters of the word are elements of a field.
By construction, Christoffel matrices have two distinct entries.

A remarkable property of the Christoffel matrices of order $n$ over the field $K$ is that they form a commutative subgroup 
of $GL_n(K)$ (Theorem \ref{group}). 

In order to prove this result, and to give the structure of this group, we note that the Christoffel matrices  have the vector 
$(1,\ldots,1)$ as left eigenvector, since the row- and column- sums of 
each matrix in the group are constant. They have as supplementary subspace the space spanned by the differences $e_{i-1}-
e_i$ ($e_i$ the canonical basis), and the restriction to this subspace belongs to a finite commutative group. 
The group is isomorphic to the cartesian product of the square of the group of invertible elements of the ground field by the group of 
invertible elements of the ring $\Z/n\Z$. This allows to compute the inverse of any Christoffel matrix, Corollary \ref{inverse}.
Another consequence is 
that the determinant of such a matrix is equal to $\pm$ its row-sum, multiplied by the difference of its two distinct entries raised to 
the power $n$ (Corollary \ref{detBW}). The sign is the Zolotareff (Zolotarev) symbol, a generalization of the Jacobi symbol.

The next task of this article is to compute the determinants obtained from Sturmian sequences. One takes such a sequence, $s$ 
say, on 
the alphabet $\{0,1\}$ and forms the determinant whose rows are $n$ factors (connected subwords) of this sequence; recall that 
such a sequence has $n+1$ distinct factors of length $n$, so one obtains $n+1$ determinants, which are the components of some 
vector $V_n$ of length $n+1$. It turns out that this vector, viewed as a word, is a {\em perfectly clustering word} on a three-letter 
alphabet, which may be computed precisely (Theorem \ref{main}).

Recall that perfectly clustering words on $\ell$ letters are equivalently defined as {\em word-encodings} of {\em symmetric discrete 
exchanges of $\ell$ intervals}, as follows from the work of Ferenczi and Zamboni \cite{FZ}. Perfectly clustering words were studied 
by Simpson and Puglisi \cite{SP} and by Restivo and Rosone \cite{RR}. When $\ell=2$, by a result of Mantaci, Restivo and 
Sciortino \cite{MRS},  these words are the Christoffel words, and their conjugates, including the {\em standard words} of Aldo de 
Luca. One property of perfectly clustering words that we use is that such a word has a unique palindromic factorization (product of 
two palindromes), a result due to \cite{SP}, and proved previously for Christoffel words by Chuan \cite{Ch}; this property is 
preserved by conjugation.

In order to compute $V_n$, we show that for arbitrarily large values of $n$, precisely $n=|w|-1$, where $w$ is a Chistoffel word 
which 
is a factor of $s$, $V_n$ is a Christoffel word that may be computed using our result on Burrows-Wheeler matrices: it is the last 
row of the  inverse of the Burrows-Wheeler matrix of $w$. Then we describe how to pass from $V_n$ to $V_{n-1}$, up to a sign. This is 
done in two 
ways. One of them is to use the palindromic factorization of  $V_n$: write a product of words $V_n=XxyY$, $Xx, yY$ 
palindromes, with $x,y$ letters; then $V_{n-1}$ is obtained by merging and adding $x$ and $y$ (Corollary \ref{VV}).

In Section \ref{cont-frac}, Theorem \ref{main} is given an interpretation in terms of continued fractions; the Sturmian determinants are expressed using the 
semi-convergents and convergents of the slope of the Sturmian sequence.

In Section \ref{fibo}, we apply the article to compute the determinants associated with the Fibonacci sequence (Fibonacci word), 
which are shown to 
be Fibonacci numbers (up to a sign). The sign is precisely computed in Section \ref{sign0}; it amounts to determine the Zolotareff 
symbol $\legendre{F_{m-2}}{F_{m}}_Z$, that is, the sign of the permutation of $\Z/m\Z$ which is multiplication by $F_{m-2}$ modulo $F_m$, Proposition \ref{sign}.

\medskip

\noindent {\bf Acknowledgments}: this article arose after mail exchanges with Jean-Paul Allouche. We heartily thank him for his 
suggestions and 
encouragements.

\medskip

\noindent
{\bf Convention}: We consider words whose letters are elements of a field, and therefore identify words with vectors over this field. We index rows 
and columns of matrices starting from 0; likewise for vectors and words.

\part{Group of Christoffel matrices}

\section{Burrows-Wheeler matrix of a Christoffel word}\label{BW}

We consider words over an alphabet $A$. Let $w=a_0\cdots a_{n-1}$ be such a word, of length $n$, so that $a_i\in A$. 
We assume that $w$ is {\em primitive}, that is, not the power of another word. Then $w$ has $n$ distinct rotations called {\em conjugates}, of the form $a_i\cdots a_na_0\cdots a_{i-1}$ for $i=0, 
\ldots,n-1$.   For example, the conjugates of the French word
{\tt rai} are $\{ {\tt rai}, {\tt air}, {\tt ira} \}$.

The length of a word $w$ is written $|w|$.  The number of occurrences of a letter $a$ in $w$ is
written $|w|_a$, and called the $a$-{\em length}.

We consider $n \times n$ matrices.  We index
their rows and columns by $\{0,1,\ldots,n-1\}$, and we identify this set naturally with $\Z/n\Z$.

The {\em Burrows-Wheeler matrix} (or BW-matrix for short), due to \cite{BW}, of a primitive word $w=a_0\cdots a_{n-1}$ over a totally ordered alphabet, is the $n \times n$ matrix 
$(a_{ij})$, where for $i=0,\ldots,n-1$, the  words $a_{i,0}\cdots a_{i,n-1}$ are the distinct conjugates of 
$w$, in decreasing lexicographical order\footnote{In contrast with what is commonly used in the literature, where one takes the increasing order.}. See Figure \ref{BWmatrix}.
\begin{figure} [ht]
$$
\left[
\begin{array}{cccccccccc}
1&0&0&1&0&0&0\\
1&0&0&0&1&0&0\\
0&1&0&0&1&0&0\\
0&1&0&0&0&1&0\\
0&0&1&0&0&1&0\\
0&0&1&0&0&0&1\\
0&0&0&1&0&0&1\\
\end{array}
\right]
$$
\caption{Burrows-Wheeler matrix of the Christoffel word $0001001$.}\label{BWmatrix}
\end{figure}


Among the many equivalent definitions of Christoffel words (see, for example, the books \cite{L,Re}), we take the following, which actually is a characterization due 
to Mantaci, Restivo and Sciortino \cite{MRS}: $w$ is a {\em lower (resp., upper) Christoffel word} if it is a word over a totally ordered two-letter alphabet $\{a<b\}$ such that
\begin{itemize}
\item[(a)] $|w|_a$ and $|w|_b$ are relatively prime;
\item[(b)] $w$ is the lowermost row (resp., uppermost row) of its BW-matrix; and
\item[(c)] the last column of this matrix is (not necessarily strictly) increasing, from the first row to the last.
\end{itemize}

Of course, we identify naturally rows and columns with the corresponding word. Note that the two previous Christoffel words, the 
first row and the last row, are conjugate. See Figure \ref{BWmatrix}. It is known that $|w_a|$ and $|w|_b$, together with the order $a<b$, completely determine each of these two words. Note that in each row, and each column, there are $|w|_a$ ($resp. |w|_a$) occurrences of letter $a$ (resp. $b$).

\section{Group of Christoffel matrices}

Let $\mathbb K$ be a field. We identify words over $\mathbb K$, in a natural way, with row vectors. For instance, the word $1^n$ 
is identified with the vector of length $n$ each component of which is equal to 1.

We say that a matrix is a {\it Christoffel matrix\/} if it is the  BW matrix of some Christoffel word whose 
alphabet is a two-element subset of $\mathbb K$, where this subset is equipped with some total order. We let $M_n(a,b,r)$ denote the Christoffel matrix of 
length $n$ associated with the lower and 
upper Christoffel word over the two-letter alphabet $\{a,b\}\subset \mathbb K$, with the order $a<b$, with $b$-length $r$. For 
example, $M_7(0,1,2)$ is the matrix represented in Figure \ref{BWmatrix}. 

We now fix $n$ and let $G_n$ denote the group of permutations of $\Z/n\Z$ (where the latter is identified with $[n]=\{0,1,\ldots,n-1\}$) of the form $\omega_r:x\mapsto rx$, for some $r$ relatively prime with $n$. This group is isomorphic to the group of invertible elements of the ring $\Z/n\Z$, under the isomorphism $r\mapsto \omega_r$.

\begin{theorem}\label{group} Suppose that the characteristic of $\mathbb K$ is $> n$. The set of all matrices $M_n(a,b,r)$, where $a,b\in \mathbb K$, $a\neq b$, $r\in[n]$, $r,n$ relatively prime,
and $(n-r)a+rb\neq 0$,
forms a commutative subgroup $\mathcal G$ of $\GL_n(\mathbb K)$.  The mapping $\mathcal G\to K^*\times K^*\times G_n, M_n(a,b,r)\mapsto 
((n-r)a+rb, b-a,\omega_r)$, is an isomorphism.
\end{theorem}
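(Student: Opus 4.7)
I write $M := M_n(a,b,r) = aJ + (b-a)P_r$, with $J$ the all-ones matrix and $P_r := M_n(0,1,r)$. Because every row and every column of $M$ sums to $(n-r)a+rb$, the all-ones vector $\mathbf{1}$ is a common eigenvector with this eigenvalue, and its complement $D:=\{v:\sum_i v_i = 0\}$ is $M$-invariant. On $D$ one has $J = 0$, so $M|_D = (b-a)\,P_r|_D$ depends only on $r$ up to the scalar $b-a$. I also record the auxiliary identities $J^2 = nJ$ and $JP_r = P_r J = rJ$, the latter because each row and column of $P_r$ contains exactly $r$ ones.

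\textbf{The key multiplicative identity.} The heart of the proof is
\[
P_r\,P_{r'} \;=\; \lambda_{r,r'}\,J \;+\; P_{\,rr' \bmod n}, \qquad \lambda_{r,r'} \;=\; \frac{rr' - (rr'\bmod n)}{n}.
\]
Given the additive shape on the right, applying both sides to $\mathbf{1}$ and using $P_s\mathbf{1} = s\mathbf{1}$ forces the stated value of $\lambda_{r,r'}$. Proving the shape itself is the main obstacle, and is where the detailed combinatorics of Christoffel words enter: I would pin down the permutation $\sigma_r$ of $\Z/n\Z$ that realises the BW-sort of the conjugates as an affine map $i \mapsto -r^{-1}(i+1) \bmod n$ (verified via the standard LF-structure of the BWT applied to Christoffel words), then use the closed form $(w_r)_k = \lfloor (k+1)r/n\rfloor - \lfloor kr/n\rfloor$ for the letters of the Christoffel word with $r$ ones to compute the $(i,j)$-entry of $P_r P_{r'}$ and match it, entry by entry, against $P_{rr'\bmod n}$ up to the constant offset captured by $\lambda_{r,r'}J$. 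All subsequent manipulations are linear algebra.

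\textbf{Group law, isomorphism, and invertibility.} With the identity in hand, expanding
\[
M_n(a,b,r)\,M_n(a',b',r') \;=\; (aJ+(b-a)P_r)(a'J+(b'-a')P_{r'})
\]
and using the auxiliary identities collapses the product to $\alpha J + (b-a)(b'-a')\,P_{rr'\bmod n}$ for a scalar $\alpha \in \mathbb K$. This has the shape of a Christoffel matrix $M_n(a'',b'',r'')$ with $r''=rr'\bmod n$ (coprime to $n$), $b''-a''=(b-a)(b'-a')\neq 0$, and (from $\mathbf{1}$ being an eigenvector of the product) $(n-r'')a''+r''b''=[(n-r)a+rb][(n-r')a'+r'b']\neq 0$, so the product lies again in $\mathcal G$. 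Because $\mathrm{char}\,\mathbb K > n$, $n$ is invertible, so the linear system $b''-a''=\delta$, $na''+r''\delta=\rho$ has a unique solution for any $(\rho,\delta)\in\mathbb K^*\times\mathbb K^*$, and the map $\Phi : M_n(a,b,r)\mapsto ((n-r)a+rb,\,b-a,\,\omega_r)$ is therefore a bijection $\mathcal G \to \mathbb K^*\times \mathbb K^*\times G_n$. The product formula identifies $\Phi$ as a group homomorphism; $\mathcal G$ contains the identity $M_n(0,1,1)=I$ and, via $\Phi$, a two-sided inverse for each element — so $\mathcal G\subset\GL_n(\mathbb K)$, $\Phi$ is the claimed isomorphism, and commutativity of $\mathcal G$ is inherited from the target.
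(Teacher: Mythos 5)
Your decomposition $M_n(a,b,r)=aJ+(b-a)P_r$, with everything reduced to the single identity $P_rP_{r'}=\lambda_{r,r'}J+P_{rr'\bmod n}$, is a genuinely different route from the paper's. There, the authors pass to the basis $e_0-e_1,\dots,e_{n-2}-e_{n-1},1^n$, use the $2\times 2$-square structure of consecutive rows (Corollary \ref{consecutiveRows}) to get $v_iM=(b-a)v_{ir^*\bmod n}$, and read off a block-diagonal form $\operatorname{diag}\bigl((b-a)N_r,\ (n-r)a+rb\bigr)$; the product formula $M_n(0,1,r)M_n(0,1,s)=M_n(Q,Q+1,R)$ is then deduced as a \emph{corollary} of the theorem. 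You invert that logical order, which is legitimate and arguably cleaner --- but it means you cannot lean on the paper for your key identity, and that identity is precisely where you stop. ``Proving the shape itself is the main obstacle\dots\ I would pin down\dots\ then\dots\ match it, entry by entry'' is a plan, not a proof; since every other step of your argument is, as you say, routine linear algebra, the one substantive claim is left unestablished. That is a genuine gap, and moreover the entry-by-entry route you sketch (LF map plus floor-function formulas) amounts to counting the intersection of an arithmetic progression with a cyclic interval, which is not a triviality one can wave at.

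The identity is true, and there is a short proof from the column structure of the BW matrix (the paper's Lemma \ref{thelemma}): column $j$ of $P_r$ has its $r$ ones in the cyclically consecutive rows $rj,rj+1,\dots,rj+r-1$ modulo $n$, i.e.\ $P_re_j=C^{rj}u_r$, where $C$ is the cyclic shift $e_i\mapsto e_{i+1}$ and $u_r=e_0+\cdots+e_{r-1}$ (columns). Hence $P_rP_{r'}e_j=\sum_{0\le t<r'}C^{r(r'j+t)}u_r=C^{rr'j}\sum_{0\le t<r'}C^{rt}u_r$. The index multiset $\{rt+s:0\le t<r',\,0\le s<r\}$ is the interval $[0,rr'-1]$, so modulo $n$ each residue occurs $Q$ times and the residues $0,\dots,R-1$ once more, where $rr'=R+Qn$; thus $\sum_tC^{rt}u_r=Q\mathbf{1}+u_R$ and $P_rP_{r'}=QJ+P_R$, confirming $\lambda_{r,r'}=Q$. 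With that computation substituted for your sketch, your verification of closure, of the homomorphism property, of bijectivity (using $\operatorname{char}\mathbb K>n$ to solve for $a,b$), and of invertibility and commutativity via the target group is complete and correct.
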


Note that $n-r$ is the number of 
$a$'s and $r$ the number of $b$'s in the  Christoffel word corresponding to the BW matrix $M_n(a,b,r)$. Thus $(n-r)a+rb$ is the  common value of the row-sums, and also of the column-sums, of this matrix. 

We need the following lemma. 

\begin{lemma} Let $M$ be the BW matrix of the Christoffel word $w$ of length $n$ over the alphabet $\{a<b\}$, with $|w|_b=r$. 
Then the word obtained by reading down the first column of $M$ is $b^ra^{n-r}$, and for $i=2,\ldots,n$, the $i$-th column is 
obtained from the $(i-1)$-th column by cyclically shifting the latter downwards by $r$ steps.
\label{thelemma}
\end{lemma}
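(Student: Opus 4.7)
The plan is to reduce both halves of the statement to a single identity about the sorting permutation. Let $\sigma\colon[n]\to[n]$ be defined so that row $i$ of $M$ is the rotation $R_{\sigma(i)}=w_{\sigma(i)}w_{\sigma(i)+1}\cdots w_{\sigma(i)+n-1}$ (indices mod $n$); then $M_{i,j}=w_{\sigma(i)+j\bmod n}$. The first-column claim is immediate: exactly $r$ positions $p$ satisfy $w_p=b$, so exactly $r$ rotations start with $b$, and in decreasing lex order (with $a<b$) they form the top block of $M$. The column-shift claim is equivalent to
\[
\sigma(i+r)\equiv\sigma(i)-1\pmod n\quad\text{for every }i,
\]
since this rewrites as $M_{i+r,j}=w_{\sigma(i)+j-1}=M_{i,j-1}$, i.e.\ column $j$ is column $j-1$ cycled downward by $r$.

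To establish this identity I would invoke the classical residue description of the lower Christoffel word: $w_p=b$ iff $pr\bmod n\ge n-r$. Introducing $g\colon\Z/n\Z\to\{a,b\}$ with $g(y)=b$ on $\{n-r,\ldots,n-1\}$ and $g(y)=a$ elsewhere, this reads $w_p=g(pr\bmod n)$. Setting $y=pr\bmod n$, the rotation starting at $p$ is the word
\[
T(y):=\bigl(g(y),\,g(y+r),\,g(y+2r),\,\ldots,\,g(y+(n-1)r)\bigr),
\]
so since $p\mapsto pr\bmod n$ is a bijection of $[n]$, everything reduces to showing that $T$ is strictly lex-increasing: $T(y)<T(y+1)$ for every $y\in\{0,\ldots,n-2\}$. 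Granting this, the decreasing lex listing of rotations is $T(n-1),T(n-2),\ldots,T(0)$, so $\sigma(i)=r^{-1}(n-1-i)\bmod n$, whence $\sigma(i+r)\equiv\sigma(i)-r^{-1}r\equiv\sigma(i)-1\pmod n$ as desired.

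The main obstacle is the monotonicity of $T$, which I expect to handle by a boundary analysis. Since $g$ switches value only at the boundaries $n-r-1\mapsto n-r$ (from $a$ to $b$) and $n-1\mapsto 0$ (from $b$ to $a$), the positions $j$ at which $T(y)_j\neq T(y+1)_j$ are precisely the two indices $j_1,j_2\in[n]$ determined by $y+j_1 r\equiv n-r-1$ and $y+j_2 r\equiv n-1\pmod n$; a one-line computation gives $j_2\equiv j_1+1\pmod n$. Thus $j_1<j_2$ unless $j_1=n-1$, and at $j_1$ one reads $T(y)_{j_1}=a<b=T(y+1)_{j_1}$, giving $T(y)<T(y+1)$. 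The exceptional case $j_1=n-1$ forces $y=n-1$, which lies outside the required range. An alternative proof via the standard factorization of Christoffel words is conceivable, but the residue approach above seems more direct.
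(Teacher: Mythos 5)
Your proof is correct, and it is a genuinely more self-contained variant of the paper's argument. Both proofs rest on the same arithmetic description of the word (yours: $w_p=g(pr\bmod n)$ for the lower word; the paper's: $a_j=b$ iff $qj\bmod n\in\{0,\dots,r-1\}$ for the upper word, $q=n-r$ --- these are equivalent), and both ultimately exploit $-q\equiv r\pmod n$. The difference is in what is taken for granted: the paper simply \emph{asserts} the formula $a_{i,j}=b\iff(i+qj)\bmod n\in\{0,\dots,r-1\}$ for the lexicographically sorted conjugates, reads off the first column, and then locates the cyclic run of $b$'s in column $j$ at rows $rj,\dots,rj+r-1$; you instead \emph{prove} the sorting step, via the boundary analysis showing $T(y)<T(y+1)$ for $y\le n-2$, and then deduce the column shift from the clean recurrence $\sigma(i+r)\equiv\sigma(i)-1\pmod n$. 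Your monotonicity argument is sound: the two disagreement positions satisfy $j_2\equiv j_1+1\pmod n$, the exceptional case $j_1=n-1$ occurs only for $y=n-1$, and $j_1\neq j_2$ since $1\le r\le n-1$. What your route buys is a complete proof of the lexicographic ordering that the paper imports from the literature; what the paper's route buys is brevity and a direct picture of where the $b$'s sit in each column (which it reuses in Corollary \ref{consecutiveRows}). One small point worth making explicit in your write-up: the identity $\sigma(i+r)\equiv\sigma(i)-1$ is to be read with the row index $i+r$ reduced mod $n$, which is exactly what ``cyclically shifting downwards by $r$'' requires, and $\sigma$ is well defined because primitivity makes the $n$ rotations distinct.
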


See, for example, Figure \ref{BWmatrix}, where $r=2$.

\begin{proof} We adapt the definition of lower Christoffel words to upper Christoffel words (see \cite{L,Re}).   Let $q=n-r$. Then the upper Christoffel word $w=a_0\cdots a_{n-1}$ with $|w|_b=r$ is defined by 
$$
a_j=\begin{cases} 
	       b, 	 & \text{if} \,\,  qj \bmod n \in \{ 0,\ldots,r-1\}; \\
	       a,        &  \text{if} \,\, qj \bmod n \in \{  r,\ldots,n-1\}.
         \end{cases}
$$

The conjugates of $w$ are, in decreasing lexicographical order, the words
$$
w_i=a_{i,0}\cdots a_{i,n-1}, i=0,\ldots,n-1,
$$
with
$$
a_{i,j}=\begin{cases} 
	       b, 	 & \text{if} \,\,  (i+qj) \bmod n \in \{ 0,\ldots,r-1\};  \\
	       a,        &  \text{if} \,\, (i+qj) \bmod n \in \{  r,\ldots,n-1\}.
         \end{cases}
$$
Thus the BW matrix $M$ of $w$ is the matrix $(a_{ij})_{0\leq i,j\leq n-1}$.
In particular, $w_0$ is an upper Christoffel word, and the corresponding lower Christoffel word is $w_{n-1}$. 

The formulas show that the first column of $M$, viewed as a word, is $b^ra^{n-r}$. Moreover, that each column is a conjugate of this word. 

Let $j\in\{1,\ldots,n-1\}$.
Since $i+qj\equiv \modd{0} {n}$ is equivalent to $i\equiv \modd{rj} {n}$,
the $r$ occurrences of $b$ in column $j$ are located in rows $rj,rj+1,\ldots, rj+r-1$, viewed modulo $n$. These occurrences are cyclically consecutive, which 
implies the lemma.
\end{proof}

We deduce a well-known and remarkable property of the BW matrix of a Christoffel word; namely, how one passes from each row to the next one. 

In what follows, we let  $r^*$ denote the unique inverse in $\{1,\ldots,n-1\}$ of $r$ modulo $n$, for $0<r<n$ and $r,n$ relatively prime. 

\begin{corollary}\label{consecutiveRows} Suppose that $n\geq 2$. Let $1 \leq i < n$. Then the $(i-1)$-th and $i$-th rows of $M$ are equal, except for the elements in the submatrix
$$
\left[
\begin{array}{cc}
a_{i-1,j-1}&a_{i-1,j}\\
a_{i,j-1}&a_{i,j}
\end{array}
\right]
=
\left[
\begin{array}{cc}
b&a\\
a&b
\end{array}
\right]
$$
where $j=ir^*\bmod n$.
\end{corollary}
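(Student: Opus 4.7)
The plan is to promote the explicit formula established in the proof of Lemma \ref{thelemma} to a direct entry-by-entry comparison of rows $i-1$ and $i$. From that proof, rewriting $i+qj\equiv i-rj\pmod n$ with $q=n-r$, one has
\[ a_{i,j}=b \iff (i-rj)\bmod n \in S:=\{0,1,\ldots,r-1\}, \]
and $a_{i,j}=a$ otherwise. This single formula is all I need.

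Fix $i$ and set $k_j=(i-rj)\bmod n$. Then $a_{i,j}=b$ iff $k_j\in S$, while $a_{i-1,j}=b$ iff $(k_j-1)\bmod n\in S$; using $r<n$ to handle the wrap-around at $k_j=0$, the latter is equivalent to $k_j\in\{1,2,\ldots,r\}$. Hence rows $i-1$ and $i$ disagree at column $j$ exactly when $k_j$ lies in the symmetric difference of $\{0,\ldots,r-1\}$ and $\{1,\ldots,r\}$, namely $\{0,r\}$. When $k_j=0$ one reads off $a_{i-1,j}=a$ and $a_{i,j}=b$; when $k_j=r$ one reads off $a_{i-1,j}=b$ and $a_{i,j}=a$.

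The last step is to solve for $j$. The congruence $rj\equiv i\pmod n$ yields $j\equiv ir^*\pmod n$; write $j^*$ for its representative in $\{0,\ldots,n-1\}$. The hypothesis $1\leq i\leq n-1$ together with $\gcd(r^*,n)=1$ forces $j^*\in\{1,\ldots,n-1\}$, so in particular $j^*\geq 1$. The other congruence $rj\equiv i-r\pmod n$ gives $j\equiv ir^*-rr^*\equiv j^*-1\pmod n$, and since $j^*\geq 1$ the canonical representative is simply $j^*-1$. The two differing columns are therefore the adjacent pair $j^*-1$ and $j^*$, with values forming the block $\left[\begin{smallmatrix}b&a\\a&b\end{smallmatrix}\right]$, exactly the claim with $j=j^*=ir^*\bmod n$.

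Nothing in this argument is truly delicate; the only point requiring vigilance is the corner case $j^*=0$. Were it possible, the two differing columns would be $0$ and $n-1$, cyclically but not linearly adjacent, and the corollary would need a wrap-around clause. The hypothesis $1\leq i<n$ combined with $\gcd(r^*,n)=1$ is precisely what excludes this case, so guarding this edge is the only bookkeeping worth naming as an obstacle.
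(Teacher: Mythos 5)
Your proof is correct and takes essentially the same route as the paper's: both rest on the explicit entry formula established in the proof of Lemma \ref{thelemma} and locate the differing columns by solving the congruence $rj\equiv i \pmod n$. You simply replace the paper's appeal to ``inspection'' of the column structure with a rigorous entry-by-entry comparison, including the worthwhile check that $j^*=ir^*\bmod n$ cannot be $0$, so the two differing columns are genuinely adjacent rather than wrapping around.
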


See Figure \ref{BWmatrix}, where $r=2$, $r^*=4$, and  $n=7$.

\begin{proof} By inspection of the matrix, using Lemma~\ref{thelemma}, we see that two consecutive rows differ only by a submatrix as indicated (see Figure \ref{BWmatrix}). It remains to determine its exact location. Let $i-1 $ and $i$ be the indices of the two rows. Then the right lower entry of this $2 \times 2$ submatrix is in position $i,j$ say, and in this position there is a $b$ that is the first $b$ of the cyclic run of $b$'s in column $j$.

In the proof of Lemma~\ref{thelemma}, we have seen that in column $j$, the first $b$ in the cyclic run of $b$'s is in row $jr \bmod n$. Hence, modulo $n$, we have $i\equiv jr$, and thus $j\equiv ir^*$.
\end{proof}

\begin{proof}[Proof of Theorem \ref{group}] Let square matrices act at the right on row vectors of length $n$.
Recall that the row-sums of $M=M_n(a,b,r)$ are equal to its column-sums, and to $(n-r)a+rb$. Since the column-sums of $M$ are 
constant, the row vector $1^n$ is an eigenvector of this matrix, with eigenvalue $(n-r)a+rb$.
Since the characteristic is $>n$, the line spanned by this vector has as supplementary subspace the  subspace spanned by the $n-1$ vectors $v_i=e_{i-1}-e_i$, $i=1,\ldots,n-1$, where 
$e_0,\ldots,e_{n-1}$ is the canonical basis of the space of row vectors.

Now $v_iM=(e_{i-1}-e_i)M=L_{i-1}-L_i$, where $L_k$ is the $k$-th row of $M$. It follows from Corollary \ref{consecutiveRows} that $L_{i-1}-L_i=(b-a)(e_{j-1}-e_j)$ 
where $j\equiv \modd{ir^*} {n}$. Hence $v_iM=(b-a)v_{ir^* \bmod n}$. 

In the basis $v_1,\ldots,v_{n-1},1^n$, the matrix of the endomorphism defined by right multiplication by $M$ is a diagonal sum of 
two matrices: the product of $b-a$ by a permutation matrix $N$ of 
order $n-1$, and the $1 \times 1$ matrix $(n-r)a+rb$. The matrix $N$ has a 1 in row $i$ and column $ir^* \bmod n$ for $i=1,\ldots,n-1$, 
and 0's elsewhere; equivalently, the 
1's are in row $jr \bmod n$ and column $j$. In other words, $N$ is the matrix of the permutation $x\mapsto rx$ of $(\Z/n\Z)^*$. 

It follows that the mapping of the theorem is a group homomorphism. 

Let $(c,d,\omega)$ in the group $K^*\times K^*\times G_n$. Looking for a preimage $M_n(a,b,r)$, we must have $c=(n-
r)a+rb,d=b-a, \omega=\omega_r$. The last equation determines $r\in [n]$ uniquely, since $r=\omega_r(1)=\omega(1)$. Then we 
must have $b=a+d$, $c=(n-r)a+r(a+d)=na+rd$, hence, by the hypothesis on the characteristic of $\mathbb K$,  $a=(1/n)(c-rd)$ 
and $b=(1/n)(c-rd+nd)=(1/n)(c+(n-r)d)$. We deduce that the mapping is an isomorphism. In particular, $\mathcal G$ is commutative.
\end{proof}

For later use, we state the following result.

\begin{corollary}\label{ba} Let $q=n-r$.  Let $h\equiv \modd{jq} {n}$ for $1 \leq j < n$. Then the row vectors $(a_{h-1,1},\ldots, a_{h-1,n-j-1})$ and $
(a_{h,1},\ldots,a_{h,n-j-1})$ are equal, except for their last two entries, which satisfy $a_{h-1,n-j-1}=b$ and $a_{h,n-j-1}=a$.
\end{corollary}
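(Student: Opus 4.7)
The plan is to reduce the statement to Corollary \ref{consecutiveRows} applied to the pair of consecutive rows indexed by $h-1$ and $h$. That corollary asserts that these two rows coincide everywhere outside a single $2\times 2$ submatrix located at columns $(j'-1, j')$ with $j' \equiv hr^* \pmod{n}$, and that this submatrix has $b$ on its main diagonal and $a$ off the diagonal.

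The crucial computation is then to pin down the column index $j'$. Using the hypothesis $h \equiv jq \pmod{n}$ together with $q = n-r$ and the defining relation $rr^* \equiv 1 \pmod{n}$, I expect
$$j' \equiv hr^* \equiv jqr^* \equiv j(n-r)r^* \equiv -j \pmod{n},$$
so, since $1 \le j < n$, the unique representative of $-j$ in $\{1,\ldots,n-1\}$ is $n-j$, and therefore $j' = n-j$.

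With this identification, rows $h-1$ and $h$ differ exactly at columns $n-j-1$ and $n-j$, and I would read the entries off the $2\times 2$ block directly: $a_{h-1,n-j-1}=b$, $a_{h,n-j-1}=a$ (together with $a_{h-1,n-j}=a$ and $a_{h,n-j}=b$, which will not be needed). Restricting attention to the window of columns $1,\ldots,n-j-1$, the only column in this window at which the two rows can disagree is the rightmost one, namely $n-j-1$, and the disagreement is exactly the one claimed.

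The only delicate point is the congruence $qr^* \equiv -1 \pmod{n}$, which is really the heart of the argument; but it is immediate from $q = n-r$ and $rr^* \equiv 1 \pmod{n}$, so I do not foresee any genuine obstacle beyond carefully tracking the indices between the original numbering $j'$ of Corollary \ref{consecutiveRows} and the new numbering $n-j$ appropriate for this statement.
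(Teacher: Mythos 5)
Your proposal is correct and follows essentially the same route as the paper: both reduce the statement to Corollary~\ref{consecutiveRows} with row index $i=h$ and compute the column location via $hr^*\equiv jqr^*\equiv -j\equiv n-j \pmod{n}$, using $qr^*\equiv -1$. The reading-off of the entries from the $2\times 2$ block and the restriction to columns $1,\ldots,n-j-1$ is exactly the paper's argument.
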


\begin{proof} Use Corollary \ref{consecutiveRows}, with $i=h$, thus $i=jq$, with equalities modulo $n$. Let $J=ir^*$. Then $J=jqr^*=-j=n-j$. It follows from the Corollary that rows $i-1$ and $i$ of the matrix coincide in positions $0,1,\ldots,J-2$ and that their $(J-1)$-th entry is $b$ for the first and $a$ for the second. Since $J-1=n-j-1$, we get the result.
\end{proof}

\section{Consequences}

\begin{corollary}\label{inverse} The inverse of $M_n(0,1,r)$ is $M_n(-Q/r,1-Q/r,r^*)$, where $r^*\in [n]$ is defined by $rr^*=1+Qn$.
\end{corollary}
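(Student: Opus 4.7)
The plan is to invoke the isomorphism $\Phi:\mathcal{G}\to K^*\times K^*\times G_n$, $M_n(a,b,r)\mapsto ((n-r)a+rb, b-a,\omega_r)$ from Theorem \ref{group}. Since this is a group isomorphism, inverting $M_n(0,1,r)$ inside $\mathcal{G}$ reduces to inverting its image in the direct product, and then reading off the parameters of the preimage.

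First I would compute $\Phi(M_n(0,1,r)) = (r,\,1,\,\omega_r)$, since the row-sum is $(n-r)\cdot 0 + r\cdot 1 = r$ and the letter difference is $1-0=1$. Inverting in $K^*\times K^*\times G_n$ coordinate by coordinate gives $(1/r,\,1,\,\omega_r^{-1})$. The permutation $\omega_r^{-1}$ is $\omega_{r^*}$ where $r^*\in[n]$ satisfies $rr^*\equiv 1\pmod n$, i.e.\ $rr^* = 1+Qn$ for some integer $Q$.

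Next I would solve for $(a,b,s)$ such that $\Phi(M_n(a,b,s)) = (1/r,1,\omega_{r^*})$. The third coordinate forces $s=r^*$. The second gives $b-a=1$, so $b=a+1$. Substituting into the first coordinate equation yields
\[
(n-r^*)a + r^*(a+1) = \tfrac{1}{r},
\]
which simplifies to $na + r^* = 1/r$, hence $a = (1-rr^*)/(nr) = -Q/r$ using $rr^*=1+Qn$. Then $b = a+1 = 1 - Q/r$, giving the claimed inverse $M_n(-Q/r,\,1-Q/r,\,r^*)$.

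The only potential obstacle is verifying that this triple is in the admissible domain of $\Phi$, namely that the entries are distinct ($b-a=1\neq 0$, automatic) and the row-sum $1/r$ is nonzero (automatic since $r\in K^*$). Everything else is a direct substitution, so there is no real obstacle — the corollary is essentially a computation inside the product group, once Theorem \ref{group} is in hand.
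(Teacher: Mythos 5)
Your proposal is correct and follows essentially the same route as the paper: both reduce the problem to the product group $K^*\times K^*\times G_n$ via the isomorphism of Theorem \ref{group}. The only cosmetic difference is that you invert the image $(r,1,\omega_r)$ and solve for the preimage, whereas the paper directly verifies that the two given matrices map to mutually inverse triples; the computation $a=-Q/r$, $b=1-Q/r$ is the same in both.
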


\begin{proof} Taking the image of both matrices under the isomorphism of Theorem \ref{group}, we obtain the two elements $(r,1,\omega_r)$ and $((n-r^*)\frac{-Q}{r}+r^*(1-\frac{Q}{r}),1,\omega_{r^*})$. Since the first component of the second triple is $(1/r)(-nQ+r^*Q+rr^*-r^*Q)=1/r$, these two elements are inverse each of another, which implies the corollary.
\end{proof}

\begin{corollary}\label{detBW} The determinant of $M_n(a,b,r)$ is equal to $$((n-r)a+rb)(b-a)^{n-
1}\sgn(\omega_r).$$ Moreover, $(n-r)a+rb$ is an eigenvalue of $M_n(a,b,r)$, and the others eigenvalues are 
products of $b-a$ by $\varphi(n)$-th roots of unity.
\end{corollary}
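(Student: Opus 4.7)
The plan is to read off both statements directly from the block decomposition already established in the proof of Theorem~\ref{group}. There it is shown that in the basis $(v_1,\ldots,v_{n-1},1^n)$ of $\mathbb K^n$, right multiplication by $M=M_n(a,b,r)$ is block diagonal: on the hyperplane spanned by the $v_i$ it acts as $(b-a)N$, where $N$ is the permutation matrix of $\omega_r\in G_n$, and on the line spanned by $1^n$ it acts as the scalar $(n-r)a+rb$.

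For the determinant, I would simply take $\det$ of each block. The one-dimensional block contributes $(n-r)a+rb$. The $(n-1)\times(n-1)$ block $(b-a)N$ has determinant $(b-a)^{n-1}\det(N)=(b-a)^{n-1}\sgn(\omega_r)$, since the determinant of a permutation matrix is the signature of the permutation. Multiplying gives the stated formula. The change of basis matrix cancels out (it appears as $P$ and $P^{-1}$ when conjugating $M$), so no extra factor is introduced.

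For the eigenvalues, the spectrum of $M$ is the union of the spectra of the two blocks: the scalar $(n-r)a+rb$ together with $(b-a)\zeta$ for each eigenvalue $\zeta$ of $N$. It then suffices to observe that every eigenvalue of $N$ is a $\varphi(n)$-th root of unity. This follows from Euler's theorem: $r^{\varphi(n)}\equiv 1\pmod n$, so $\omega_r^{\varphi(n)}$ is the identity of $G_n$, hence $N^{\varphi(n)}=I$. Thus the minimal polynomial of $N$ divides $X^{\varphi(n)}-1$, and its roots all lie among the $\varphi(n)$-th roots of unity.

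There is essentially no obstacle here beyond careful bookkeeping; all the structural work was done in proving Theorem~\ref{group}, and the corollary is a direct consequence of the block-diagonal description of $M$ in a suitable basis.
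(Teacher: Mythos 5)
Your proposal is correct and follows essentially the same route as the paper: both read the determinant and the spectrum off the block-diagonal form (the $1\times 1$ block $(n-r)a+rb$ and the block $(b-a)N$) obtained in the proof of Theorem~\ref{group}, and both use $\omega_r^{\varphi(n)}=\mathrm{id}$ to conclude $N^{\varphi(n)}=I$. The only point worth stating explicitly, as the paper does, is that $N$ is the permutation matrix of the restriction of $\omega_r$ to $(\Z/n\Z)^*$, which has the same signature as $\omega_r$ because $\omega_r$ fixes $0$.
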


\begin{proof} By the proof of Theorem \ref{group}, the matrix $M_n(a,b,r)$ is 
conjugate to the diagonal sum two matrices: the $(n-1) \times (n-1)$ matrix $(b-a)P$, and the $1 \times 1$ 
matrix $((n-r)a+rb)$, where $P$ is the matrix of the permutation $\pi:x\mapsto 
rx$ of $(\Z/n\Z)^*$. This permutation has the same sign as the permutation 
$\omega_r$ of $(\Z/n\Z)$, since $\omega_r$ fixes $0$. Hence 
$\det(P)=\sgn(\omega_r)$. The assertion for the determinant follows.

Now, since the order of the group of invertible elements in the ring $\Z/n\Z$ is 
$\varphi(n)$, we have $\omega_r^{\varphi(n)}=1$. Hence $\pi^{\varphi(n)}=1$, which implies that 
$P^{\varphi(n)}=I_{n-1}$. This implies the assertion on the eigenvalues.
\end{proof}

The sign of the permutation $\omega_r: x\mapsto rx$ of $\Z/n\Z$ is known as the {\em Zolotareff symbol}, denoted $\left(\frac{r}{n}\right)_Z$. 
It is known that for $n$ odd, the Zolotareff symbol is equal to the {\em Jacobi symbol} $\legendre{r}{n}$, which by definition 
is the product $\prod_i\legendre{r}{p_i}$, where $n=\prod_ip_i$ is written as a product of primes, and where $\legendre{r}{p}$ is the {\em Legendre symbol}; see \cite{Z, R, B, Ro}.  For $n$ even, see 
Lerch \cite{Le} or Barnes \cite{Barnes}.

The Christoffel words over the alphabet $\{0,1\}$ are particularly meaningful; their BW matrices multiply as follows.

\begin{corollary} One has the product formula
$$M_n(0,1,r)M_n(0,1,s)=M_n(Q,Q+1,R),$$ where $rs=R+Qn$ (Euclidean division by $n$).
\end{corollary}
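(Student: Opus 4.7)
The plan is to invoke the isomorphism of Theorem~\ref{group} and compute the product in the image group $K^*\times K^*\times G_n$, where multiplication is componentwise.

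First I would compute the image of each factor. Under $M_n(a,b,r)\mapsto((n-r)a+rb,\,b-a,\,\omega_r)$, we have $M_n(0,1,r)\mapsto(r,1,\omega_r)$ and $M_n(0,1,s)\mapsto(s,1,\omega_s)$. Their product in $K^*\times K^*\times G_n$ is therefore $(rs,\,1,\,\omega_r\omega_s)=(rs,\,1,\,\omega_{rs})$. Since $rs=R+Qn$, we have $\omega_{rs}=\omega_R$ as permutations of $\Z/n\Z$, and in $K^*$ the element $rs$ equals $Qn+R$.

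Next I would compute the image of the proposed right-hand side $M_n(Q,Q+1,R)$:
\[
\bigl((n-R)Q+R(Q+1),\,(Q+1)-Q,\,\omega_R\bigr)=(nQ+R,\,1,\,\omega_R).
\]
This matches the product computed above, so injectivity of the isomorphism from Theorem~\ref{group} yields the desired identity.

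The only thing to verify before invoking the isomorphism is that $M_n(Q,Q+1,R)$ actually lies in $\mathcal G$: one needs $R$ coprime to $n$, which follows because $\omega_R=\omega_{rs}=\omega_r\omega_s$ is invertible in $G_n$, and one needs $(n-R)Q+R(Q+1)=rs\neq 0$ in $\mathbb K$, which holds since $r,s\in K^*$ and $\mathcal G$ is closed under multiplication. No obstacle is anticipated; the entire argument is a bookkeeping verification inside the target group, which is why the statement is presented as a corollary to the structure theorem.
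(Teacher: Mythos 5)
Your proposal is correct and follows exactly the paper's argument: apply the isomorphism of Theorem~\ref{group} and verify that both sides map to $(nQ+R,\,1,\,\omega_R)$ in $K^*\times K^*\times G_n$. The extra check that $M_n(Q,Q+1,R)$ lies in $\mathcal G$ is a reasonable bit of diligence the paper leaves implicit, but it does not change the route.
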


\begin{proof} One has simply to apply the isomorphism of Theorem \ref{group}. The identity to prove is equivalent to $$(r,1,\omega_r)(s,1,\omega_s)=(nQ+R,1,\omega_R),$$ which is true.
\end{proof}

As an example, the matrix in Figure \ref{BWmatrix} is $M_7(0,1,2)$. Since $2^2=4+7\cdot0$ and $2^3=1+7\cdot 1$, its square and its cube are the matrices $M_7(0,1,4)$ and $M_7(1,2,1)$, which are
$$
\left[
\begin{array}{cccccccccc}
1&1&0&1&0&1&0\\
1&0&1&1&0&1&0\\
1&0&1&0&1&1&0\\
1&0&1&0&1&0&1\\
0&1&1&0&1&0&1\\
0&1&0&1&1&0&1\\
0&1&0&1&0&1&1\\
\end{array}
\right]
\quad \mbox{and} \quad
\left[
\begin{array}{cccccccccc}
2&1&1&1&1&1&1\\
1&2&1&1&0&1&1\\
1&1&2&1&1&1&1\\
1&1&1&2&1&1&1\\
1&1&1&1&2&1&1\\
1&1&1&1&1&2&1\\
1&1&1&1&1&1&2\\
\end{array}
\right],
$$
the Burrows-Wheeler matrices of the lower Christoffel words $0101011$ and $1111112$.

\part{Sturmian determinants}

\section{Determinants of Sturmian factors}\label{determ-sturmian}

We consider Sturmian sequences (also called Sturmian words, but in this article we prefer to reserve the word ``word" for finite words), over 
the alphabet $\{0,1\}$. It is well known that 
each such infinite sequence has, for each natural number $n$, exactly $n+1$ factors (also called {\em subwords}, but for 
some authors, ``subword" has another 
meaning) of length $n$. Taking $n$ of these $n+1$ words, one obtains an $n$ by $n$ matrix with entries in $\{0,1\}$, whose 
determinants we 
want to compute.

To be precise, we need some notation and terminology. Let $A$ be a $(k+1) \times k$ matrix over a commutative ring. 
Write $A=(L_0,\ldots,L_k)$, where the $L_i$ are the rows of $A$.
Define $a_i=(-1)^{k-i}\det(L_0,\ldots,\hat L_i,\ldots, L_k)$. Then we write
$$D(A)=(a_0,\ldots,a_k),$$
and call $D(A)$ the {\em determinantal vector} of $A$.

Now, let $g$ be a Sturmian sequence, order the $n+1$ factors of length $n$ in reverse 
lexicographic order, and form the $(n+1)\times n$ matrix $G_n$ whose rows are 
these words, the largest being the uppermost row. 

We call $D(G_n)$ the {\em $n$-th determinantal vector} of the given Sturmian 
sequence, and denote it $V_n$. Our aim is to compute it. It will turn out 
that $V_n$, viewed as a 
word, is a perfectly clustering word on an alphabet with two or three letters.

For more on Sturmian sequences, see \cite{AS,L}.

\section{Perfectly clustering words and symmetric discrete interval exchanges}\label{PC}

A word is {\em perfectly clustering} if the last column of its Burrows-Wheeler matrix is (not necessarily strictly) increasing from top to bottom. This matrix was defined in Section 
\ref{BW}; recall that the rows of the matrix are 
decreasing from top to bottom. 

A word $w$ is a {\it Lyndon word} if for each 
proper factorization $w=uv$, $w$ is smaller than $vu$ in lexicographical order; equivalently, $w$ of length $n$ is a Lyndon word $w$ if 
and only if it is the last row of its Burrow-Wheeler matrix, and if the latter has $n$ rows.  For more on Lyndon words, see, e.g., \cite{L} or 
\cite{R}.


Let $n \geq 1$. Let $(c_1,\ldots,c_\ell)$ be a {\it composition} of $n$, that is, an $\ell$-tuple of natural numbers whose sum is $n$ (for convenience, we allow 0's in the composition). We decompose  the interval $$[n]=\{0,1,2,\ldots,n-1\}$$ into  
intervals in two ways: the intervals $I_1,\ldots,I_\ell$ (resp., $J_1,\ldots,J_\ell$) are defined by the condition that they are consecutive and $\mid I_j\mid 
=c_j$ (resp., $\mid J_h\mid=c_{\ell+1-h}$). Let $S_n$ denote the group of permutations of $[n]$. We define the permutation $\sigma\in S_n$
by the condition that its restriction to each $I_h$ is the unique increasing bijection
onto
$J_{\ell+1-h}$. We call such a permutation a {\em symmetric 
discrete interval exchange}\footnote{The word ``symmetric" refers to the fact that 
the intervals are exchanged according to the central symmetry of the set 
$\{1,2,\ldots,\ell\}$, that is, the mapping $h\mapsto \ell+1-h$.}, and we say it is 
{\em associated with the composition} $(c_1,\ldots,c_\ell)$.

A symmetric discrete interval exchange may be equivalently defined by using {\it local translations}.    Indeed, the   permutation $\sigma$ acts  on $[n]$ as   a 
discrete version of an interval exchange 
as we now describe in the case of three intervals. Let $\ell=3$ and 
$(c_1,c_2,c_3)$ the composition, with $c_1+c_2+c_3=n$. Then the permutation  $\sigma$ is defined by

\begin{equation}\label{local-translations}
\sigma(i)=\begin{cases} 
	       i+c_2+c_3, 	 & \text{if} \,\,  i\in I_1=\{0,\ldots,c_1-1\};  \\
	       i+c_3-c_1,   &  \text{if} \,\, i\in I_2=\{c_1,\ldots,c_1+c_2-1\};  \\
	       i-c_1-c_2,   & \text{if} \,\, i\in I_3=\{c_1+c_2,\ldots,n-1\}.
               \end{cases}
\end{equation}
In particular, if $c_2=0$, $\sigma$ is translation by $c_3$ in $\Z/n\Z=\{0,1,\ldots,n-1\}$.

As an example, consider the composition $(2,2,5)$ of 9; then $J_1=\{0,1,2,3,4\},J_2=\{5,6\},J_3=\{7,8\}$ and 
\begin{equation}
\sigma
=\left( \begin{array}{ccccccccc}
0&1&2&3&4&5&6&7&8 \\
7&8&5&6&0&1&2&3&4
\end{array}\right).
\label{sigma1}
\end{equation}
The second row of $\sigma$ is obtained by concatenating the intervals $J_3,J_2,J_1$.


We call $\sigma$ a {\it circular} symmetric discrete interval exchange if $\sigma$ is a circular permutation, that is, it has only one 
cycle. The example in Eq.~\eqref{sigma1} is circular, since in cycle form we have $$\sigma=(0,7,3,6,2,5,1,8,4)
.$$
Note that there is no simple criterion known for a symmetric discrete interval exchange to be circular, except for the cases $\ell=2$ ($c_1$ and $c_2$ must be relatively prime), 
and the case $\ell=3$ ($c_1+c_2$ and  $c_2+c_3$ must be relatively prime, by a result of Pak and Redlich \cite{PR}\footnote{They also prove that the probability that a 
symmetric discrete exchange of three intervals in $S_n$ is circular tends to $1/\zeta(2)=6/\pi^2$, when $n\to \infty$.}). However, there is a general construction of 
all circular discrete interval exchanges, for all $\ell $, due to Mélodie 
Lapointe \cite{L}.

Let $A$ be a totally ordered alphabet of cardinality $\ell$.
A {\it word encoding} of a circular symmetric discrete interval exchange $\sigma$ is one of the words obtained by replacing  
each number $i$ in one of the cycle forms of $\sigma$ 
by the $j$-th letter in $A$, if $i\in I_j$. 
In the example, we have 
$$I_1=\{0,1\},\ I_2=\{2,3\},\  I_3=\{4,5,6,7,8\},$$ and we obtain the 
word $acbcbcacc$, with $A=\{a<b<c\}$. 

Note that we have $n$ choices for the 
cycle form, so we obtain $n$ words from $\sigma$, which clearly form a conjugacy class. 
The word encoding is called {\it standard\/} if one chooses the cycle form beginning with $1$, as in the example.

\begin{theorem} (Ferenczi and Zamboni \cite{FZ}) \label{3conditions}   Let $w$ be a primitive word $w$ over a totally ordered 
alphabet.  Then the following conditions are equivalent:
\begin{itemize}
\item[(i)] $w$ is a perfectly clustering Lyndon word;

\item[(ii)] $w$ is the standard encoding of some circular symmetric interval exchange.
\end{itemize}
%
\end{theorem}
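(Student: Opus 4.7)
The plan is to extract from the Burrows--Wheeler matrix $M$ of $w$ a canonical permutation $\pi$ on the index set $[n]$, and to show that $\pi$ is precisely a circular symmetric discrete interval exchange whose standard encoding recovers $w$ (running this construction in reverse for the other implication). Define $\pi(i)$ to be the row index of the cyclic right-shift of row $i$ of $M$; this is a bijection because right-shifting preserves the set of conjugates, and it is a single $n$-cycle because $w$, being Lyndon, is in particular primitive.

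For (i)$\Rightarrow$(ii), let $w$ be perfectly clustering Lyndon over $\{a_1<\cdots<a_\ell\}$, with $c_j=|w|_{a_j}$. The rows of $M$ being in decreasing lex order forces the first column to be weakly decreasing, partitioning $[n]$ into consecutive blocks $F_j$ of size $c_j$ (the rows starting with $a_j$, with $F_\ell$ on top). Perfect clustering forces the last column to be weakly increasing, partitioning $[n]$ into consecutive blocks $L_j$ of size $c_j$ (the rows ending in $a_j$, with $L_1$ on top). The right-shift $\pi$ sends rows ending in $a_j$ to rows starting with $a_j$, so $\pi(L_j)=F_j$. Moreover, within a block the rows are sorted by the decreasing lex order of the varying $n-1$ positions; since the right-shift preserves this varying string, the restriction $\pi|_{L_j}\to F_j$ is order-preserving. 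Matching sizes and starting positions yields $L_j=I_j$ and $F_j=J_{\ell+1-j}$, so $\pi$ is exactly the symmetric discrete interval exchange attached to the composition $(c_1,\ldots,c_\ell)$. It is circular by primitivity, and the standard encoding of $\pi$, read from the cycle starting at the appropriate representative of $I_1$, coincides with $w$: one step of $\pi$ converts a row ending in $a_j$ into a row starting with $a_j$, and chaining these conversions along the cycle spells out the letters of $w$ in order.

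For (ii)$\Rightarrow$(i) I would reverse this construction. Given a circular symmetric discrete interval exchange $\sigma$ with composition $(c_1,\ldots,c_\ell)$ and standard encoding $w$, assemble a matrix $M$ whose rows are the conjugates of $w$, indexed by $[n]$ so that cyclic right-shift corresponds to $\sigma$. One then checks that $M$ is the BW matrix of $w$: the rows lie in decreasing lex order because their first-letter blocks have sizes $c_\ell,\ldots,c_1$ and within each block the induced order matches the increasing bijection prescribed by $\sigma$; the last column is weakly increasing with blocks $I_j$, which is exactly perfect clustering; and the bottom row is $w$ itself, which makes $w$ Lyndon.

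The principal obstacle is combinatorial bookkeeping rather than a conceptual difficulty: one must simultaneously reconcile the shift-right versus shift-left convention, the decreasing versus increasing lex ordering of rows, the assignments $F_j\leftrightarrow J_{\ell+1-j}$ and $L_j\leftrightarrow I_j$, and the ``beginning with $1$'' convention pinning down the starting index for the standard encoding. I would isolate all of these in a single indexing lemma identifying the cycle of $\pi$ through the distinguished element of $I_1$ with the standard encoding of $w$, after which both implications follow by matching two descriptions of the same permutation.
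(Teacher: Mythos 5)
The paper does not actually prove this theorem --- it is quoted from Ferenczi and Zamboni \cite{FZ} --- so there is no internal proof to compare against; I can only assess your argument on its own terms. Your basic construction is the right one: defining $\pi(i)$ as the row index of the cyclic right shift of row $i$, identifying the block $L_j$ of rows ending in $a_j$ with $I_j$ and the block $F_j$ of rows beginning with $a_j$ with $J_{\ell+1-j}$, and observing that $va_j\mapsto a_jv$ preserves the within-block order, does exhibit $\pi$ as the symmetric discrete interval exchange of the composition $(c_1,\ldots,c_\ell)$, circular by primitivity. (One can check on the paper's example $acbcbcacc$ that $\pi$ comes out to exactly the displayed $\sigma=(0,7,3,6,2,5,1,8,4)$.)

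The genuine gap is the final step of (i)$\Rightarrow$(ii), and it recurs symmetrically in the converse. Since the letter attached to an index $i$ is the last letter of row $i$, and row $\pi^k(i_0)$ is the $k$-fold right shift of row $i_0$, the encoding read from the cycle starting at $i_0$ spells row $i_0$ \emph{backwards}. The \emph{standard} encoding fixes the starting point of the cycle (at $0$, as in the paper's example), so it spells the reversal of the top row of the Burrows--Wheeler matrix, i.e.\ the reversal of the lexicographically greatest conjugate. To conclude that this equals $w$ you need the greatest conjugate of a perfectly clustering Lyndon word to be $\tilde w$; that is precisely the Simpson--Puglisi reversal property \cite{SP}, which the paper itself invokes as a known nontrivial fact in the proof of Corollary \ref{VV}, and it is not automatic, since reversal does not reverse lexicographic order in general. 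Your phrase ``starting at the appropriate representative of $I_1$'' concedes the point: without pinning that representative down to $0$, you have only shown that $w$ is \emph{some} cyclic encoding of $\sigma$, i.e.\ conjugate to the standard one, which is weaker than (ii). The same fact is what would make the bottom row of your assembled matrix equal to $w$ in (ii)$\Rightarrow$(i). A secondary, more routine gap in that direction: the decreasing lexicographic order of the assembled rows does not follow from the first-column block structure alone; one must induct on positions, reducing the comparison of rows $i<i'$ with equal first letters to that of rows $\sigma^{-1}(i)<\sigma^{-1}(i')$ truncated by one letter, with circularity guaranteeing termination. Both points are repairable, but they are where the real content of the theorem lives.
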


Note that if one of the parts is $0$ in the composition $(c_1,c_2,c_3)$, then the corresponding perfectly clustering Lyndon word is a lower Christoffel word.   In particular, we use this when the composition is $(p,0,q)$: then the word is the lower Christoffel word of slope $q/p$ (slope is defined below). 


\section{Factors of Sturmian sequences}\label{factors}

The {\em slope} of a Christoffel word $w\in \{0<1\}^*$ is by definition $|w|_1/|w|_0$; lower Christoffel words are in bijection with the positive rationals, via their slope. To also obtain 
the words $0$ and $1$ of length $1$, one has to include the slopes $0/1$ and $1/0$, so that the bijection is now onto $\mathbb Q\cup \infty$.

Let $g=(a_n)_{n\in \N}$ be a Sturmian sequence over the alphabet $\{0,1\}$. 
The limit of the sequence $|w|_1/|w|_0$ for $w$ a factor of
$g$, when the length 
$w$ tends to infinity, is called the {\em slope} of this sequence.
Note that we  are not considering the quantity $|w|_1/|w|$, as do many authors; see Section \ref{cont-frac} on continued fractions, for comparing both notions. 

It is well known that the set of factors of a given Sturmian sequence $g$ depends only on its slope. 
We describe this set as follows. 
Consider the infinite path, in the Stern-Brocot tree (see \cite{GKP}), starting from the root, determined by the slope of $g$. This path is coded by a sequence $(\lambda_n)_{n\geq 1}$ 
over the set $\{\ell, r\}$, where $\ell$ 
means ``left" 
and $r$ means ``right". Each node in this path is a positive rational number, and corresponds to some prefix of the sequence; this number 
is the slope of some lower Christoffel word. 
Let $01=w_0, w_1,w_2,\ldots$ be the 
corresponding sequence of Christoffel words. 
Then one has the following: {\em a word is a factor of the Sturmian sequence $g$ if and only if it is a circular factor of some word $w_n$}. Recall that a {\em circular factor\/} of a 
word $u$ is a factor of some conjugate of $u$; equivalently, a prefix of some conjugate of $u$.

Note that the sequence of lengths $|w_n|$ is strictly increasing, with first element $|w_0|=2$. 

Each Christoffel word $w$, of length $\geq 2$, is uniquely the product of two Christoffel words (theorem of Borel and Laubie, see 
\cite{Re} Theorem 2.4.1); and all three are 
simultaneously lower or upper. It is called it the {\em standard factorization of} $w$, and denote it by $w=w'w''$. 

One has the following fact: for each $n$, $w_n$ is equal to the longer of the two words $w'_{n+1},w''_{n+1}$.

\section{Determinants}

In the next result, vectors are identified with words.

\begin{theorem}\label{main} Let $g$ be a Sturmian sequence over the alphabet $\{0,1\}$, with associated sequence of lower 
Christoffel words $01=w_0,w_1,w_2,\ldots$. Let $n\geq 2$.
The $n$-th determinantal vector $V_n$ of $g$ is a perfectly clustering word on a two- or three-letter alphabet. 

Precisely, let $\nu$ the unique integer such that $|w_{\nu-1}|\leq n\leq |w_\nu|-1$, let $N=|w_\nu|$,
and let $\epsilon$ be the sign of the permutation $x\mapsto |w_\nu|_1 x$ of $(\Z/N\Z)^*$.

\begin{enumerate}
\item If $n=|w_\nu|-1$, then $V_n$ is the lower Christoffel word associated with the composition $$(|w''_\nu|,|w'_\nu|),$$ over the alphabet 
$$\{-|w'_\nu|_1,|w''_\nu|_1\},$$ multiplied by $\epsilon$.

\item If $|w_{\nu-1}|\leq n < |w_\nu|-1$, let $i=|w_\nu|-1-n$; then $V_n$ is the perfectly clustering word over the three-letter alphabet 
$$\{-|w'_\nu|_1,|w''_\nu|_1-|w'_\nu|_1,|w''_\nu|_1\},$$ associated with the composition $$(|w''_\nu|-i,i,|w'_\nu|-i),$$ multiplied by $\epsilon(-1)^t$, 
where $$t=\sum_{1\leq j\leq i} (|w_\nu|-j+d_j-(jq\bmod N)),$$
with $d_i$ equal to the number of $j, 1\leq j\leq i$, such 
that $jq\bmod N$ is smaller than $iq\bmod N$.
\end{enumerate}
\end{theorem}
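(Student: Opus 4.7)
Plan for the proof of Theorem~\ref{main}: The argument splits into two parts. First I prove case~(1), the anchor value $n = |w_\nu|-1$, by directly identifying $V_n$ with a scalar multiple of the last row of the inverse Burrows--Wheeler matrix of $w_\nu$. Then I reduce case~(2) to case~(1) by iterating the palindromic rewriting rule of Corollary~\ref{VV}, descending $i := |w_\nu|-1-n$ steps.

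\textbf{Case (1).} The $N := |w_\nu|$ factors of $g$ of length $N-1$ are the length-$(N-1)$ prefixes of the conjugates of $w_\nu$. Hence, in reverse lexicographic order, they form the matrix $G_{N-1}$ obtained from the Burrows--Wheeler matrix $M := M_N(0,1,r)$, with $r = |w_\nu|_1$, by deleting the last column. By the cofactor-expansion identity underlying the definition of $D(\cdot)$, $V_{N-1}$ lies in the one-dimensional left kernel of $G_{N-1}$. So does the last row $u$ of $M^{-1}$, since $uM = e_{N-1}^{\mathsf{T}}$. Hence $V_{N-1}$ and $u$ are proportional, and comparing their last coordinates (each is, up to the factor $\det M$, the minor of $M$ with row $N-1$ and column $N-1$ removed) gives
\[
V_{N-1} \;=\; (\det M)\, u \;=\; r\,\epsilon\, u
\]
by Corollary~\ref{detBW}. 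Corollary~\ref{inverse} identifies $u$ as a lower Christoffel word over $\{-Q/r,\,1-Q/r\}$ with $b$-length $r^*$, where $rr^* = 1+QN$. The Borel--Laubie relation $|w''_\nu|_1\,|w'_\nu|_0 - |w'_\nu|_1\,|w''_\nu|_0 = 1$ for the standard factorization rearranges to $rr^* = 1 + N|w'_\nu|_1$, whence $Q = |w'_\nu|_1$ and $r^* = |w'_\nu|$. Substituting, $r\epsilon\, u$ is $\epsilon$ times the lower Christoffel word of composition $(|w''_\nu|,\,|w'_\nu|)$ over the alphabet $\{-|w'_\nu|_1,\,|w''_\nu|_1\}$, as asserted.

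\textbf{Case (2).} I proceed by descending induction on $i = |w_\nu|-1-n$. Each step invokes Corollary~\ref{VV}: writing $V_n = XxyY$ with $Xx$ and $yY$ palindromes and $x,y$ single letters, we have $V_{n-1} = X(x+y)Y$ up to a sign. The palindromic factorization of a perfectly clustering word is unique (by Chuan~\cite{Ch} and Simpson--Puglisi~\cite{SP}), so the pivot $xy$ is well-defined at each step. Applied once to the two-letter anchor $V_{N-1}$ from case~(1), the rule introduces the middle letter $|w''_\nu|_1 - |w'_\nu|_1$ and yields the composition $(|w''_\nu|-1,\,1,\,|w'_\nu|-1)$. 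Iterating, each subsequent step transfers one letter from each outer block to the middle block, producing composition $(|w''_\nu|-i,\,i,\,|w'_\nu|-i)$; verifying this structural claim reduces to locating the palindromic pivot in a three-letter perfectly clustering word, which is governed by the cyclic structure of $M$ described in Corollary~\ref{ba}.

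\textbf{Main obstacle.} The delicate part is the cumulative sign $\epsilon(-1)^t$. Each application of Corollary~\ref{VV} flips the sign by a factor depending on the parity of the pivot position, and these positions shift from step to step as the word shortens. To pin them down, I use Corollary~\ref{ba}: at step $j$ of the descent the pivot lies in the position corresponding to the row $h \equiv jq \pmod{N}$ of $M$, contributing $|w_\nu| - j - (jq \bmod N)$ to the exponent, while the correction $d_j$ counts how many earlier pivots sat to the left of the current one and hence have shifted its index. Showing that the accumulated exponent collapses to
\[
t \;=\; \sum_{1 \leq j \leq i}\bigl(|w_\nu|-j + d_j - (jq \bmod N)\bigr)
\]
requires a careful telescoping of the signs of Corollary~\ref{VV} against the index shifts coming from Corollary~\ref{ba}. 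This combinatorial bookkeeping dominates the proof; once it is established, the rest is structural.
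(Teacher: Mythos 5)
Your Case (1) is essentially the paper's argument: identify $G_{N-1}$ as $M_N(0,1,r)$ minus its last column, recognize $V_{N-1}$ as $\det(M)$ times the last row of $M^{-1}$ (your left-kernel phrasing is an equivalent repackaging of the cofactor identity), invoke Corollaries \ref{detBW} and \ref{inverse}, and use the determinant-one relation of the standard factorization to pin down $Q=|w'_\nu|_1$ and $r^*=|w'_\nu|$. That part is sound.

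Case (2), however, has a genuine gap, and it is structural rather than a matter of detail. You propose to descend from $V_{N-1}$ to $V_n$ by iterating Corollary \ref{VV}. But Corollary \ref{VV} is a \emph{consequence} of Theorem \ref{main}: its proof in the paper uses the fact (established by the theorem) that $V_{n+1}$ is a perfectly clustering word of a known composition, together with the identification of the palindromic cut point with the largest digit in the standard cyclic form of the associated interval exchange. Using it as the engine of the induction is circular unless you supply an independent proof, which you do not. Moreover, even granting Corollary \ref{VV}, it only determines $V_n$ from $V_{n+1}$ up to an unspecified sign, so it cannot by itself yield the exponent $t$; your ``main obstacle'' paragraph essentially restates the formula for $t$ from the theorem without deriving it. What is actually needed, and what the paper supplies, is threefold: (i) a proof that $G_n$ is obtained from $G_{n+1}$ by deleting one specific row (indexed, after correction by $d_i$, by $h_i=(iq\bmod N)-d_i$) and the last column, which requires showing the surviving rows are exactly the $n+1$ distinct circular factors (Proposition \ref{FF}); (ii) the determinantal identity $D(A)\xrightarrow{h}(-1)^{k-h}D(B)$ governing how the determinantal vector transforms under this deletion (Proposition \ref{A->D(A)}), which is the actual source of the sign $(-1)^{k_j-h_j}$; and (iii) the fact that the perfectly clustering words of compositions $(|w''_\nu|-i,i,|w'_\nu|-i)$ undergo the same merge at the same position $h_i$, proved via cyclic restriction (discrete Rauzy induction) of the underlying interval exchange (Lemma \ref{restriction0} and Corollary \ref{vv}). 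Your appeal to Corollary \ref{ba} cannot substitute for this: it describes rows of the full Burrows--Wheeler matrix $M$, and connecting it to the shrinking matrices $G_n$ is precisely the content of Proposition \ref{FF}. Without (i)--(iii) the induction step, and in particular the sign, is not established.
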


Define $q=|w_\nu|_0$ and $r=|w_\nu|_1$; in particular $q+r=N$. Then $|w''_\nu|=q^*$, $|w'_\nu|=r^*$, the inverses in $[N]$ of $q$ and $r$ modulo $N$ (see 
e.g., \cite[Corollary 14.1.5]{Re}).

As an example, consider $w_\nu=01^201^301^3$. The standard factorization is $w_\nu=w'_\nu w''_\nu$, $w'_\nu=01^201^3, w''_\nu=01^3$, of 
respective lengths $7,4$, and 
respective $1$-lengths $5,3$. For $n=|w_\nu|-1$, the vector $V_{10}$ is, up to a factor $\pm 1$, the lower Christoffel word associated with the composition $(4,7)$; the alphabet is $\{-5,3\}$, and $V_{10}=\pm(-5,3,-5,3,3,-5,3,3,-5,3,3)$.

Now consider $n=|w_\nu|-3$, so that $i=2$. Then $V_8$ is, up to a factor $\pm 1$, the perfectly clustering word associated with 
the composition $(4-2,2,7-2)=(2,2,5)$, over the alphabet $\{-5,-2,3\}$, and $V_8=\pm(-5,3,-2,3,-2,3,-5,3,3)$ (compare to the 
example in Section \ref{PC}).

It is known that each perfectly clustering word has a unique factorization as the product of two palindromes (Simpson and Puglisi 
\cite{SP}, Corollary 4.4). We call this the {\em palindromic factorization}.

\begin{corollary}\label{VV} For any $n\geq 1$, up to multiplication by $\pm$, $V_n$ is obtained form $V_{n+1}$ by considering its unique 
palindromic factor, and adding the last component of its first factor and the first component of its second factor.
\end{corollary}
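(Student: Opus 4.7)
My plan is to combine Theorem~\ref{main} with a structural symmetry of the symmetric discrete interval exchange underlying any perfectly clustering word; once this symmetry is in place, the corollary follows by matching compositions. Setting $a=-|w'_\nu|_1$, $b=|w''_\nu|_1$, $c=a+b$, and $(c_1,c_2,c_3)$ for the composition of $V_{n+1}$ as given by Theorem~\ref{main} (with the convention $c_2=0$ in the Case~1 setting), a direct inspection of the two cases shows that in every transition $n+1\to n$ (within Case~2, across the Case~1/Case~2 boundary $n+1=|w_\nu|-1$, and across the $\nu\to\nu-1$ boundary $n+1=|w_{\nu-1}|$, using that $w_{\nu-1}$ is the longer of $w'_\nu,w''_\nu$ together with the Stern--Brocot-tree identities $w''_\nu=w''_{\nu-1}$ or $w'_\nu=w'_{\nu-1}$), $V_n$ has composition $(c_1-1,c_2+1,c_3-1)$ on the same alphabet $\{a,c,b\}$ (with zeros suppressed). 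The corollary thus reduces to the following: for any perfectly clustering $W$ with composition $(c_1,c_2,c_3)$ on $\{a,c,b\}$ with $c=a+b$, the palindromic merge of $W$ equals the perfectly clustering word with composition $(c_1-1,c_2+1,c_3-1)$.

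To prove this, let $\sigma$ be the associated circular symmetric interval exchange on $[N]$ and write its cycle starting at $x_0=0$ as $(x_0,\ldots,x_{N-1})$. Define the involution $\iota:[N]\to[N]$ by $\iota(i)=d_{k-1}+d_k-1-i$ for $i\in I_k$, where $d_k=c_1+\cdots+c_k$ (reflection within each interval). The key fact, verified by a direct case analysis using the local-translation formula~\eqref{local-translations}, is
\[
\iota\sigma\iota=\sigma^{-1}.
\]
Applying this at $x_0$ and letting $k_0$ be the index with $x_{k_0}=\iota(0)=c_1-1$ gives $\iota(x_k)=x_{(k_0-k)\bmod N}$. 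Thus $\iota$ reverses each of the sub-orbits $\{x_0,\ldots,x_{k_0}\}$ and $\{x_{k_0+1},\ldots,x_{N-1}\}$, and since $\iota$ preserves each $I_k$ set-wise, the encodings of these two sub-orbits are palindromes. This yields a factorization $W=PQ$ into palindromes; by Simpson--Puglisi's uniqueness it is \emph{the} palindromic factorization, with merge letters the encodings of $x_{k_0}=c_1-1\in I_1$ (namely $a$) and $x_{k_0+1}=\sigma(c_1-1)=N-1\in I_3$ (namely $b$).

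Replacing $a$ and $b$ by $c=a+b$ corresponds, at the interval-exchange level, to deleting $N-1$ from the cycle and contracting the sub-path $c_1-1\to N-1\to c_3-1$ to a single edge $c_1-1\to c_3-1$. Comparing the local-translation formulas for $\sigma$ and for the exchange $\sigma'$ on $[N-1]$ with composition $(c_1-1,c_2+1,c_3-1)$ shows they agree on the shared domain except at $c_1-1$, which now lies in the new middle interval $I'_2$ and satisfies $\sigma'(c_1-1)=c_3-1=\sigma^2(c_1-1)$. Hence the merged word is the standard encoding of $\sigma'$, the perfectly clustering word with composition $(c_1-1,c_2+1,c_3-1)$, which by Theorem~\ref{main} equals $V_n$ up to sign.

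The main obstacle is the identity $\iota\sigma\iota=\sigma^{-1}$. While it conceptually expresses the symmetry between the compositions $(c_1,c_2,c_3)$ and $(c_3,c_2,c_1)$, a rigorous proof requires tracking, for each input interval $I_k$, the interval containing $\sigma(\iota(i))$, with sub-cases depending on the relative sizes of $c_1$ and $c_3$ (which decide whether $\sigma$ maps $I_1$ straight into $I_3$ or straddles $I_2$). The remainder is bookkeeping with Theorem~\ref{main} and the definition of the standard encoding.
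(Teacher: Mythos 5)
Your proof is correct, but it takes a genuinely more self-contained route than the paper's. The paper deduces the corollary from the machinery already assembled for Theorem~\ref{main}: part~2 of Lemma~\ref{restriction0} locates the merged factor at the cycle position of the largest remaining digit $N-j$, and the fact that this position is exactly the palindromic cut point is simply \emph{cited} (Ferenczi--Zamboni, proof of their Theorem~4; Lapointe). You instead treat Theorem~\ref{main} as a black box, verify the composition bookkeeping $(c_1,c_2,c_3)\mapsto(c_1-1,c_2+1,c_3-1)$ across all three kinds of transition (your boundary identities $w''_\nu=w''_{\nu-1}$, resp.\ $w'_\nu=w'_{\nu-1}$, are exactly what is needed), and then \emph{prove} the cited fact: the conjugation identity $\iota\sigma\iota=\sigma^{-1}$ splits the standard cycle into two $\iota$-symmetric sub-orbits cut between $c_1-1\in I_1$ and $N-1\in I_3$, which yields the palindromic factorization and identifies the two merge letters. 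Your last step --- that deleting $N-1$ from the cycle produces the exchange of composition $(c_1-1,c_2+1,c_3-1)$ --- is a direct three-interval analogue of part~1 of Lemma~\ref{restriction0}, which the paper states only for two intervals and imports from \cite{BR}; your verification via the local-translation formulas (agreement off $c_1-1$, and $\sigma'(c_1-1)=c_3-1=\sigma^2(c_1-1)$) is correct. What your route buys is independence from the external references; what it costs is the extra bookkeeping at the $\nu$-boundaries, which the paper's inductive chain handles implicitly.

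Two small points. The ``main obstacle'' you flag is not one: writing $\tau(i)=N-1-i$ for the global reflection, one checks that $\tau\iota$ restricted to $I_k$ is the increasing bijection onto $J_{\ell+1-k}$, i.e.\ $\sigma=\tau\iota$; since $\tau$ and $\iota$ are involutions, $\iota\sigma\iota=\iota\tau\iota\iota=\iota\tau=\sigma^{-1}$ with no case analysis. Also, you should note that both palindromic factors are nonempty whenever $N\ge 2$ (otherwise $\iota(0)=\sigma^{-1}(0)=\iota(N-1)$ forces $N=1$), so that the Simpson--Puglisi uniqueness applies as stated; and your ordering $a<c<b$ of the three letters clashes with the paper's convention $a<b<c$ --- harmless, but worth flagging to avoid confusion with Proposition~\ref{uu}.
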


\section{Proof of Theorem \ref{main}, first part}

For simplicity, denote $w=w_\nu$. Recall that $|w|=N$. The BW matrix of $w$ is $M_N(0,1,r)$. Since $n=N-1$, the matrix $G_n$, which is the matrix of circular factors of 
length $N-1$ of $w$, is obtained from the latter matrix by removing its 
last column. Let $D(F_n)=(a_0,\ldots,a_n)$; it follows that $a_i$ is the cofactor of the element in the last column, row $i$, of 
$M_n(0,1,r)$. Hence, $V_n=D(G_n)$
is the last row of of $M_n(0,1,r)^{-1}$, multiplied by the determinant of $M_n(0,1,r)$. The latter is equal to $r\epsilon$, by 
Corollary \ref{detBW}. Moreover, $M_N(0,1,r)^{-1}=M_N(-Q/r,1-Q/r,r^*)$,  
where $r^*\in [N]$ is defined by $rr^*=1+QN$, by 
Corollary \ref{inverse}. The last row of this matrix is the lower Christoffel 
word associated with the composition $(N-r^*,r^*)$ over the 
alphabet $\{-Q/r, 1-Q/r\}$. Multiplying by $r$, we obtain the lower 
Christoffel word associated with the composition $(N-r^*,r^*)$ 
over the alphabet $\{-Q, r-Q\}$.
 
Thus, to settles the first part of the proof, it is enough 
to show the four equalities $|w''|=N-r^*,|w'|=r^*$, and $-Q=-|w'|_1, r-
Q=|w''|_1$.

To prove the four equalities, we use the fact that the matrix 
$$\left(\begin{array}{cc} |w'|_0 & |w''|_0 \\ |w'|_1 & |w''|_1 
\end{array}\right)$$
has determinant 1 (see, e.g., \cite[Proposition 14.1.2]{Re}). Thus 
$$|w'|_0|w''|_1-|w'|_1|w''|_0=1.$$ This implies that $$(|w'|_1+|w''|
_1)(|w'|_1+|w'|_0)=1+|w'|_1(|w'|_0+|w'|_1+|w''|_0+|w''|_1).$$ That is, 
$|w|_1|w'|=|w'|_1|w|+1$; in other words, $r|w'|=|w'|_1N+1$. 
Thus $r^*=|w'|, Q=|w'|_1$. 
The two other equalities follow from $w=w'w''$.
 
\section{Some preliminary results}

In this section we assume the hypothesis of Theorem \ref{main}, in the second 
case. Recall that $G_n$ is the matrix whose rows are the $n+1$ factors of 
length $n$ of $s$, in decreasing lexicographic order.

For two matrices $A,B$, we write
\begin{equation}\label{A->B}
A\xrightarrow{h}B
\end{equation}
if for some $k$, $A$ is a $(k+1)\times k$ matrix, $B$ is a  $k \times (k-1)$ 
matrix, rows $h-1$ and $h$ of $A$ are equal except for their last entry, 
which is $1$ for row $h-1$ and $0$ for row $h$, and 
$B$ is obtained from $A$ by removing row $h$ from this matrix, and then 
removing column $k-1$, 
the last column. Note that, equivalently, $B$ is obtained by removing row $h-
1$ and the last column. Recall that we index matrix rows and columns starting from 0.

\begin{proposition}\label{FF} 
\leavevmode
\begin{enumerate}
\item $G_n$ is obtained from $G_{N-1}$ by removing the rows $jq\bmod n$, 
$j=1,\ldots,i$ and by keeping only the $n$ first columns, with $i=N-1-n$.

\item Let $h_i=(iq\bmod N)-d_i$, where $d_i$ is the number of $j, 1\leq j\leq 
i$, such 
that $jq\bmod N$ is smaller than $iq\bmod N$. Then
$$G_{n+1}\xrightarrow{h_i}G_n.$$
\end{enumerate}
\end{proposition}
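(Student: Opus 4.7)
The plan begins with recognizing that $G_{N-1}$ is the Burrows--Wheeler matrix $M = M_N(0,1,r)$ of $w = w_\nu$ with its last column deleted: the $N$ circular factors of length $N-1$ of $w$ are exactly the length-$(N-1)$ prefixes of the $N$ (distinct, since $w$ is primitive) conjugates of $w$, and these are sorted in lex-decreasing order in $M$. This reduces part (1) to analyzing when truncating $M$ to its first $n$ columns identifies two rows.

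For part (1), I plan to apply Corollary \ref{ba} (with $w = w_\nu$, so $q = |w_\nu|_0 = N - r$): for each $j \in \{1,\dots,N-1\}$ and $h = jq \bmod N$, rows $h-1$ and $h$ of $M$ agree in columns $0,\dots,N-j-2$ and differ only in column $N-j-1$, carrying $1$ and $0$ respectively. Truncating to the first $n = N-1-i$ columns therefore collapses rows $h-1$ and $h$ exactly when $N-j-1 \geq n$, i.e.\ when $j \leq i$. Since $\gcd(q,N) = 1$, the values $\{jq \bmod N : 1 \leq j \leq i\}$ are $i$ distinct nonzero residues. After removing these $i$ rows from $G_{N-1}$ and truncating to the first $n$ columns, the $i$ merges reduce the $N$ rows to $N-i = n+1$ equivalence classes, each with a non-removed representative. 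This matches the known count of $n+1$ factors of length $n$ of a Sturmian sequence, so the result really is $G_n$.

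For part (2), I compare the one-step passage $G_{n+1}\to G_n$ through part (1). That part says $G_{n+1}$ is built from $G_{N-1}$ by removing rows $jq\bmod N$ for $j=1,\dots,i-1$, while $G_n$ adds the removal of $\tilde h:=iq\bmod N$ together with the last column. The row $\tilde h$ of $G_{N-1}$ therefore appears in $G_{n+1}$ at position $\tilde h - d_i = h_i$, since $d_i$ counts exactly the number of previously-removed rows lying above $\tilde h$ (the index $j=i$ contributes $0$ because of the strict inequality in the definition of $d_i$). The arrow's column-$n$ condition then follows from Corollary \ref{ba} with $j=i$: rows $\tilde h-1$ and $\tilde h$ of $M$ agree in columns $0,\dots,n-1$ and differ in column $n$, with entries $1$ and $0$.

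The one technical subtlety---where I expect the only real obstacle---is that row $\tilde h-1$ of $M$ may itself have been removed on the way to $G_{n+1}$. In that case, row $h_i-1$ of $G_{n+1}$ actually represents a whole chain $\tilde h-k,\dots,\tilde h-1$ of consecutive rows of $M$ already collapsed by earlier merges; but by the definition of those merges, all such rows agree with row $\tilde h-1$ in columns $0,\dots,n$, so the comparison with row $\tilde h$ is unchanged. Writing out this chain-collapse bookkeeping is the careful step; beyond that, everything is a direct translation of Corollary \ref{ba}.
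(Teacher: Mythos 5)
Your proof is correct and follows essentially the same route as the paper's: both identify $G_{N-1}$ with $M_N(0,1,r)$ minus its last column, use Corollary \ref{consecutiveRows} (equivalently Corollary \ref{ba}) to locate where consecutive rows of $M$ differ, deduce that truncation to $n$ columns collapses exactly the adjacent pairs indexed by $jq\bmod N$ for $j\le i$, and conclude by counting the $n+1$ surviving distinct circular factors; part (2) is the same bookkeeping with the shift by $d_i$. Your explicit treatment of the chain case where row $(iq\bmod N)-1$ of $M$ was itself already removed is a detail the paper passes over silently, but it does not change the approach.
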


\begin{proof} 
\leavevmode
\begin{enumerate}
\item Call {\em square} a $2\times 2$ submatrix of $M=M_N(0,1,r)$ as the one appearing in Corollary \ref{consecutiveRows}, so that it 
appears in two contiguous rows and two contiguous columns, and it is the identity matrix of order 2.  As indicated in the lemma, these rows are numbered $x-1,x$ and the columns are numbered $y-1,y$ 
with $x\equiv \modd{ry} {N}$, for some $x,1\leq x\leq N-1$. We claim that if a square does not intersect the $n$ first columns, then $x \equiv \modd{qj} {N}$ for some 
$j=1,\ldots,i$. The claim will be proved below.

Consider the matrix $G$ obtained from $G_{N-1}$ by removing the rows $jq\bmod n$, $j=1,\ldots,i$ and by 
keeping only the first $n$ columns. Since $G_{N-1}$ is obtained from $M$ by removing its last column, $G$ is
equivalently obtained from $M$ by removing the rows $jq\bmod n$, $j=1,\ldots,i$ and by keeping only the first $n$ 
columns. In what follows, for simplicity, we index the rows of $G$ by their index in $M$ (equivalently in $G_{N-1}$) before removal (e.g., if we remove row 1 and keep 0 and 
2, the first two rows of $G$ are indexed by $0$ and $2$). Since the rows of $M$ are strictly decreasing from the upper one to the lowest, 
the rows of $G$ are weakly 
decreasing from the upper to the lower. Suppose, in order to get a contradiction, that they are not strictly decreasing. 
Then for some $x'<x$, 
rows $x'$ and $x$ appear in $G$, are successive, and are equal; hence the squares located in rows $x',x'+1,\ldots, x$ do not intersect the first $n$ 
columns of $M$. By the claim, $x\equiv \modd{qj} {N}$ for some $j=1,\ldots,i$, and this contradicts the fact that $G$ contains row $x$.
Thus the $N-i=n+1$ rows of $G$ are strictly decreasing, hence distinct; each row of $G$ being a prefix of a row of $M$, it is a 
circular factor of $w$, hence the rows of $G$ are the $n+1$ circular factors of $w$, and $G=G_n$.

We prove the claim: if a square (with $x,y$ as above) does not intersect the first $n$ columns, then $y-1\geq n$, hence $y\geq n+1$. Hence $x\equiv 
ry$ for 
some $y=n+1,\ldots,N-1$. Since $q+r=N$, and $n+1=N-i$, by letting $j=N-y$, we have $x\equiv ry\equiv -q(N-j)\equiv \modd{qj} {N}$, 
with $j=i,\ldots,1$. 

\item By Part 1, $G_n$ is obtained from $G_{n+1}$ by removing its row $iq\bmod N$, if the rows are indexed as they are in $G_{N-1}$, and 
then removing the last column. But 
to get their actual index, one has to subtract the number of rows which were removed before from $G_{N-1}$, which is $d_i$.
Note that $G_{n+1}$ without its last column has two successive equal rows, which (in $M$) are indexed $(iq \! \mod N)-1$ and $iq \! \mod N$. 
By Corollary \ref{consecutiveRows}, the last elements of these rows in $G_{n+1}$ are necessarily $1$ and $0$.
Hence we have $G_{n+1}\xrightarrow{h_i}G_n$.
\end{enumerate}
\end{proof}
 
For two vectors $V,W$, we write 
$$
V\xrightarrow{h}W
$$
if for some $k$ we have
$V=(a_0,\ldots,a_k)$ and
$W=(a_0,\ldots,a_{h-2},a_{h-1}+a_{h},a_{h+1},\ldots,a_k)$. That is, $W$ is obtained from $V$ by merging and adding the components ranked $h-1$ and $h$ in $V$.

\begin{proposition}\label{A->D(A)}
If $
A\xrightarrow{h}B$, with the notations of (\ref{A->B}), then $
D(A)\xrightarrow{h} (-1)^{k-h} D(B),
$
with the notation $D(A)$ of Section \ref{determ-sturmian}.
\end{proposition}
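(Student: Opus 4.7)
\bigskip
\noindent\textbf{Proof plan for Proposition \ref{A->D(A)}.}

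The plan is to unpack the conclusion $D(A)\xrightarrow{h}(-1)^{k-h}D(B)$ as three coordinate identities and verify each by a Laplace expansion. Writing $D(A)=(a_0,\ldots,a_k)$ and $D(B)=(b_0,\ldots,b_{k-1})$, the conclusion becomes
\[
a_i=(-1)^{k-h}b_i\ \ (i<h-1),\qquad a_{h-1}+a_h=(-1)^{k-h}b_{h-1},\qquad a_i=(-1)^{k-h}b_{i-1}\ \ (i>h).
\]
Recall the indexing: $B_j=A_j$ (last entry deleted) for $j<h$ and $B_j=A_{j+1}$ (last entry deleted) for $j\geq h$, and the hypothesis $A_{h-1}=A_h+e$ where $e=(0,\ldots,0,1)$ has length $k$ and $A_{h-1}^{0:k-1}=A_h^{0:k-1}=B_{h-1}$.

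For the two "generic" cases I would start from $a_i=(-1)^{k-i}\det(A_0,\ldots,\hat A_i,\ldots,A_k)$. Both rows $A_{h-1}$ and $A_h$ survive, so performing the row operation $A_{h-1}\leftarrow A_{h-1}-A_h$ replaces one row by $e$ without changing the determinant. Laplace-expanding along this all-zero-but-one row, only the entry in column $k-1$ contributes, and the remaining submatrix is exactly the $B$ rows with one row removed, which gives (up to signs) a $b_j$. The only care needed is the position of the modified row in the $k\times k$ matrix, which is $h-2$ when $i<h-1$ and $h-1$ when $i>h$; this is where the case split enters. Collecting the two signs $(-1)^{k-i}$ from the definition of $a_i$ and the cofactor sign $(-1)^{(\text{row position})+(k-1)}$, and matching against $(-1)^{k-h}(-1)^{k-1-j}$ with the appropriate $j$, the signs work out (using $(-1)^{2m}=1$ modulo routine bookkeeping).

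The case $a_{h-1}+a_h$ is the step that is genuinely different and, I expect, the mildly tricky one. Here neither of the two determinants contains both of rows $A_{h-1},A_h$, so the previous row-operation trick does not apply directly. Instead, I would exploit multilinearity in row $h-1$: substituting $A_{h-1}=A_h+e$ into $\det(A_0,\ldots,\hat A_h,\ldots,A_k)$ splits it as $\det(A_0,\ldots,\hat A_{h-1},\ldots,A_k)+\det(A_0,\ldots,A_{h-2},e,A_{h+1},\ldots,A_k)$. The difference of the two "sibling" determinants is therefore the single determinant with $e$ sitting in row $h-1$, which expands along that row into $(-1)^{(h-1)+(k-1)}\det(B_0,\ldots,\hat B_{h-1},\ldots,B_{k-1})$. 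Combining with the prefactors $(-1)^{k-h+1}$ and $(-1)^{k-h}$ of $a_{h-1}$ and $a_h$ respectively yields $a_{h-1}+a_h=\det(B_0,\ldots,\hat B_{h-1},\ldots,B_{k-1})$, and since $(-1)^{k-h}b_{h-1}=(-1)^{k-h}(-1)^{k-h}\det(\cdots)$ also equals this determinant, the identity holds.

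The main obstacle is therefore purely bookkeeping: carefully tracking how removing row $i$ shifts the positions of $A_{h-1}$ and $A_h$ (depending on whether $i<h-1$ or $i>h$), and making sure the three independent sign contributions — the $(-1)^{k-i}$ or $(-1)^{k-h+1},(-1)^{k-h}$ from the definition of the $a_i$, the cofactor sign $(-1)^{(\text{row})+(k-1)}$ from the Laplace expansion, and the $(-1)^{k-1-j}$ from the definition of $b_j$ — collapse to the claimed $(-1)^{k-h}$. No deeper idea is needed beyond multilinearity of the determinant and the Laplace expansion along a standard basis row.
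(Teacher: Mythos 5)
Your plan is correct and follows essentially the same route as the paper's proof: the same three-way case split on the coordinate index, the same observation that $L_{h-1}-L_h=(0,\ldots,0,1)$ lets you reduce each cofactor of $A$ to a cofactor of $B$ by a Laplace expansion along that row, and the same multilinearity identity for the merged coordinate $a_{h-1}+a_h$ (the paper writes it as $\det(\ldots,\hat L_{h-1},\ldots)$ versus $\det(\ldots,\hat L_h,\ldots)$ differing by a single row and takes the difference, which is your splitting of $A_{h-1}=A_h+e$ read backwards). Your sign bookkeeping in the middle case checks out, and your identification of the row positions $h-2$ (for $i<h-1$) and $h-1$ (for $i>h$) is exactly the point the paper also isolates, so the remaining verification you defer is the same routine computation the paper carries out for $j<h-1$ and leaves to the reader for $j>h-1$.
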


\begin{proof}
Write $A=(L_0,\ldots, L_k)$, where the $L_j$'s are the rows of $A$; similarly, $B=(H_0,\ldots,H_{k-1})$. 
Also let $D(A)=(a_0,\ldots,a_k), D(B)=(b_0,\ldots,b_{k-1})$.
We compute within the matrix $A$, omitting mention of, for notational simplicity, the rows before 
$L_{h-1}$ and those after $L_{h}$: 
\begin{align*}
    (-1)^{k-h}(a_{h-1}+a_{h})&=-\det(\ldots,\hat L_{h-1},L_{h},\ldots)+\det(\ldots,L_{h-1},\hat L_{h},
\ldots)\\
&=\det(\ldots,L_{h-1}-L_{h},\ldots)=\det(\ldots,(0,\ldots,0,1),\ldots)\\
&:=e.
\end{align*}
This is the cofactor of the entry in row $h-1$, column 
$k-1$, of $A$; its corresponding submatrix is $B$ without its row of rank $h-1$, so that 
$e=(-1)^{h-1+k-1}\det(H_0,\ldots,\hat H_{h-1},\ldots,H_{k-1})=b_{h-1}$. Hence $b_{h-1}=(-1)^{k-h}(a_{h-1}+a_h)$.

We now show that $b_j=(-1)^{k-h}a_j$ if $j< h-1$. We have
\begin{align*}
(-1)^{k-j}a_j &=\det(\ldots,\hat L_j,\ldots,L_{h-1},L_h,\ldots)\\
&=\det(\ldots,\hat L_j,
\ldots,L_{h-1}-L_h,L_h,\ldots);
\end{align*}
since $L_{h-1}-L_h$ is equal to $(0,\ldots,0,1)$, we see that $(-1)^{k-j}a_j $ is equal to the cofactor of $A_j$ 
(that is, $A$ without its row $j$) in row $h-2$, column $k-1$ ($A$ has $k$ columns, the last is indexed $k-1$). The corresponding 
minor is obtained from $A_j$ by removing row $h-2$ (which is row $h-1$ of $A$) and the last column; this matrix is $B$ without 
its row $j$. Hence 
\begin{align*}
(-1)^{k-j}a_j &=(-1)^{h-2+k-1}\det(\ldots,\hat H_j,\ldots)\\
&=(-1)^{h+k-1}(-1)^{k-1-j}b_j,
\end{align*}
implying that $b_j=(-1)^{k-h}a_j$. 

The verification that $b_j=(-1)^{k-h}a_{j+1}$ for $j>h-1$ is similar and left to the reader. 
\end{proof}

For two words $v,w$, we write 
$$
v\xrightarrow{h}w
$$
if $v,w$ are perfectly clustering words over the alphabet $\{a<b<c\}$, if $v=a_0\cdots a_{h-2}a_{h-1}a_ha_{h+1}\cdots a_k$, $a_{h-1}
=a, a_h=c$, and $w=a_0\cdots a_{h-2}ba_{h+1}\cdots a_k$.   In other words, $w$ is obtained from $v$ by replacing a factor $ac$ by $b$, 
and the indicated $a,c$ are the letters ranked $h-1,h$ in $v$.

\begin{proposition}\label{uu} Let $n=|w_\nu|-1,\ldots, |w_{\nu-1}|-1$, and $i=|w_\nu|-1-n$.
Let $u_n$ denote the perfectly clustering word associated with the composition 
$(|w''_\nu|-i,i,|w'_\nu|-i)$ over the alphabet $\{a<b<c\}$. 
Then for $n=|w_\nu|-2,\ldots, |w_{\nu-1}|-1$
$$u_{n+1}\xrightarrow{h_i} u_n.$$
\end{proposition}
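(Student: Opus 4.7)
The plan is to realize the symmetric discrete interval exchanges underlying $u_n$ and $u_{n+1}$ as two induced (first-return) permutations of a single rotation of $\Z/N\Z$, and to read off the passage from $u_{n+1}$ to $u_n$ from the change of return domain. Set $c_1=q^*-i$, $c_2=i$, $c_3=r^*-i$ and $M=N-i=n+1$, so that $u_n$ is the standard encoding of the exchange $\sigma$ on $[M]$ with composition $(c_1,c_2,c_3)$, while $u_{n+1}$ is that of the exchange $\sigma'$ on $[M+1]$ with composition $(c_1+1,c_2-1,c_3+1)$. The first step is to check, by a direct case analysis against the three piecewise formulas defining $\sigma$ and using the identities $q^*+r^*=N$ and $qq^*\equiv 1\pmod N$, that $\sigma$ coincides with the first-return map to $[0,M)\subset \Z/N\Z$ of the rotation $\tau:x\mapsto x+r^*\pmod N$. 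The same argument identifies $\sigma'$ with the first-return map of $\tau$ to $[0,M+1)$.

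Once this identification is made, the comparison between $\sigma$ and $\sigma'$ becomes transparent. The return domains differ only by the element $M=N-i$, and $\tau(c_1)=c_1+r^*=M$, $\tau(M)=M+r^*-N=c_3$ show that $\sigma'(c_1)=M$, $\sigma'(M)=c_3=\sigma(c_1)$, while $\sigma'=\sigma$ at every other point of $[0,M)$. On the labeling side, $c_1$ lies in $I_2$ for $\sigma$ (label $b$) but in $I_1'$ for $\sigma'$ (label $a$), and $M$, absent from $\sigma$, lies in $I_3'$ (label $c$); every other element carries the same label in both. Let $p$ be the position of $c_1$ in the cycle of $\sigma$ starting at $0$. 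The cycles of $\sigma$ and $\sigma'$ starting at $0$ share their first $p$ transitions (identical rules, with no element visited yet that carries a different label), then $\sigma'$ inserts the extra step $c_1\to M\to c_3$ in place of $\sigma$'s single step $c_1\to c_3$, after which the two cycles coincide again. Reading off labels gives: $u_{n+1}$ and $u_n$ agree on their first $p$ letters; $u_{n+1}$ has $ac$ at positions $p,p+1$ while $u_n$ has $b$ at position $p$; and for every $j\ge p+1$ the $j$-th letter of $u_n$ equals the $(j+1)$-th letter of $u_{n+1}$. This is exactly $u_{n+1}\xrightarrow{p+1}u_n$.

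It remains to show $p+1=h_i$. Writing $\sigma^p(0)=c_1$ as $\tau^K(0)=c_1$ with $K\in[0,N)$, we have $Kr^*\equiv q^*-i\pmod N$; multiplying by $r$ and using $qq^*\equiv 1$ and $r\equiv -q\pmod N$ gives $K\equiv iq-1\pmod N$, and since $i\in[1,N-1]$ and $\gcd(q,N)=1$ this forces $K=(iq\bmod N)-1$. Among the $K+1$ points $\tau^0(0),\ldots,\tau^K(0)$, the skipped ones (those in $[M,N)$) are exactly the $\tau^j(0)$ with $j=tq\bmod N$ for some $t\in[1,i]$ and $j\le K$, equivalently $tq\bmod N<iq\bmod N$; this count is $d_i$ by definition. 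Therefore $p=K-d_i$ and $p+1=(iq\bmod N)-d_i=h_i$.

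The main obstacle is spotting the first step: the realization of $\sigma$ as the first-return map of a single rotation. Without this, the cycle structure of a three-interval symmetric exchange is opaque and one is stuck manipulating the piecewise formulas directly. With it in hand, the insertion phenomenon in the second step and the identity $p+1=h_i$ in the third step both reduce to elementary arithmetic modulo $N$; the only delicate point there is to keep careful track of the strict inequality in the definition of $d_i$, which is precisely what ensures that $t=i$ does not contribute to the count.
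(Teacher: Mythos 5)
Your argument is correct and is essentially the paper's: realizing each $u_n$ as the word encoding of the induced (first-return) map of the rotation $x\mapsto x+r^*$ on $\Z/N\Z$ is exactly the content of Lemma~\ref{restriction0} (the ``cyclic restriction'', a discrete Rauzy induction), and your location of the modified letters via $K\equiv iq-1 \pmod N$ with the $d_i$ correction matches Corollary~\ref{vv}. The only differences are presentational: you reprove the induction lemma by direct case analysis where the paper cites \cite{BR}, and you compare consecutive levels $u_{n+1}\to u_n$ directly instead of relating each $u_n$ to $u_{N-1}$ first.
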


We  need some preparation.
Given a permutation $\sigma$ of $[N]$, and $0\leq k\leq N-1$, we call, as in \cite{BR}, {\it cyclic restriction} of $\sigma$ to the interval 
$[k]$ the permutation of $[k]$ obtained by 
removing in any cycle form of $\sigma$ each number larger than $k-1$ (this does not depend on the chosen cycle form). Note that 
this definition is a discrete version of {\it Rauzy induction} for continuous interval exchanges.

\begin{lemma}\label{restriction0}
\leavevmode
\begin{enumerate}
\item Let $\sigma$ the  circular symmetric discrete exchange of two intervals, associated with the composition $(\gamma,\rho)$, 
$\gamma,\rho$ relatively prime, $\gamma+\rho=N$. 
Let $n=N-1,\ldots,N-\min(\gamma,\rho)-1$, and $i=N-1-n$, so that
$i=0,\ldots,\min(\gamma,\rho)$. Let $\sigma_n$ denote the cyclic restriction of $\sigma$ to $[n+1]$, so that $\sigma=\sigma_{N-1}$. Then $\sigma_n$
is the symmetric 
discrete exchange of intervals, associated with the composition $(\gamma-i,i,\rho-i)$.

\item Let $v=a_0\cdots a_{N-1}$ be the lower Christoffel word over the alphabet $\{a<c\}$ associated with $\sigma$. 
Suppose that $\gamma<\rho$, so that $\min(\gamma, \rho)=\gamma$. For $n=N-1,\ldots,N-\gamma-1$, let $v_n$ denote the standard word 
encoding of $\sigma_n$ over the alphabet $\{a<b<c\}$, so that $v=v_{N-1}$.
Then if $n\leq N-2$, $v_n$ is obtained from $v$ by replacing in it the factor $a_{j\gamma^*-1}a_{j\gamma^*}$ by $b$, for each 
$j=1,\ldots,i$, where 
$i=N-1-n$, and where $\gamma^*\in [N]$ is the inverse of $\gamma$ modulo $N$; moreover, all these factors are equal to $ac$, 
and are not overlapping.
\end{enumerate}
\end{lemma}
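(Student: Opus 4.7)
The plan is to exploit the fact that the circular two-interval exchange $\sigma$ associated with $(\gamma,\rho)$ is simply rotation by $\rho$ on $\Z/N\Z$, so that both parts reduce to direct modular computations.

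For Part 1, I would compute $\sigma_n$ as the first-return map: for $x\in[n+1]=\{0,\ldots,N-i-1\}$, $\sigma_n(x)$ is $\sigma^k(x)$ for the least $k\geq 1$ with $\sigma^k(x)\in[n+1]$. If $x\in\{0,\ldots,\gamma-i-1\}$ then $x+\rho\in\{\rho,\ldots,N-i-1\}$ already lies in $[n+1]$, giving $\sigma_n(x)=x+\rho$. If $x\in\{\gamma-i,\ldots,\gamma-1\}$ then $x+\rho\in\{N-i,\ldots,N-1\}$ has been removed, but $x+2\rho\equiv x+\rho-\gamma\pmod N\in\{\rho-i,\ldots,\rho-1\}\subset[n+1]$, giving $\sigma_n(x)=x+\rho-\gamma$. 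Finally, if $x\in\{\gamma,\ldots,n\}$ then $x+\rho\equiv x-\gamma\pmod N$ already lies in $[n+1]$, giving $\sigma_n(x)=x-\gamma$. These three formulas are exactly the local translations \eqref{local-translations} for the three-interval composition $(\gamma-i,i,\rho-i)$, which proves Part 1.

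For Part 2, I would track how the standard encoding changes. The cycle of $\sigma$ starting at $0$ is $(0,\rho,2\rho,\ldots,(N-1)\rho)\bmod N$, and the two-interval standard encoding produces $v=a_0\cdots a_{N-1}$ with $a_k=a$ iff $k\rho\bmod N<\gamma$. The cyclic restriction $\sigma_n$ visits the same residues in the same order but with the values in $\{N-i,\ldots,N-1\}$ removed, and the remainder is re-encoded under the three-interval partition of $[n+1]$. A removal occurs at position $k$ iff $k\rho\equiv -j\pmod N$ for some $j\in\{1,\ldots,i\}$, equivalently $k\equiv j\gamma^*\pmod N$. At the preceding position $k-1\equiv j\gamma^*-1\pmod N$, the residue is $(j\gamma^*-1)\rho\equiv \gamma-j\pmod N$, which lies in the new middle interval $\{\gamma-i,\ldots,\gamma-1\}$, so its label changes from $a$ to $b$. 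All other positions keep their label, since residues in $\{0,\ldots,\gamma-i-1\}$ or $\{\gamma,\ldots,N-i-1\}$ remain labelled $a$ or $c$. Hence $v_n$ is obtained from $v$ by replacing, for each $j=1,\ldots,i$, the factor $a_{j\gamma^*-1}a_{j\gamma^*}=ac$ by the single letter $b$.

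The one point requiring care is non-overlap of these $i$ replacement factors. Since $\gcd(\gamma^*,N)=1$, the positions $j\gamma^*\bmod N$ are pairwise distinct, and an overlap of the form $j\gamma^*\equiv k\gamma^*-1\pmod N$ would force $k-j\equiv\gamma\pmod N$. The hypothesis $\gamma<\rho$ ensures $|k-j|\leq i-1<\gamma$ while also $|k-j|<\gamma<\rho=N-\gamma$, ruling out any such congruence. This modular bookkeeping is the main technical obstacle, but the conceptual content lies in identifying the first-return map with a three-interval exchange in Part 1 and reading off the corresponding alphabetic adjustment in Part 2.
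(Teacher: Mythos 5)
Your proposal is correct, and for Part 2 it is essentially the paper's own argument: both proofs use that $\sigma$ is rotation by $\rho$ on $\Z/N\Z$, that $\gamma^*\rho\equiv \modd{-1} {N}$, hence that the removed values $N-j$ sit at positions $j\gamma^*$ in the standard cycle form and the values $\gamma-j$ at positions $j\gamma^*-1$, which is exactly how the replaced factors $ac$ are located. The differences are at the margins. For Part 1 the paper simply cites the proof of Lemma 2 of Berth\'e--Reutenauer, whereas you give a direct, self-contained verification by computing the first-return map of the rotation to $[n+1]$ and matching it against the local translations \eqref{local-translations} for the composition $(\gamma-i,i,\rho-i)$; this is a genuine (if routine) addition, and it implicitly uses the easy fact that the cyclic restriction of a circular permutation coincides with its first-return map, which is worth stating once. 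For the non-overlap claim you run a modular argument (an overlap forces $k-j\equiv \modd{\gamma} {N}$, impossible since $0<|k-j|<i\leq\gamma<\rho$), while the paper gets it for free from the observation that two occurrences of the pattern $ac$ can never overlap, since an overlap would force a single position to carry both letters; your argument is correct but the paper's is shorter. In short: same skeleton for Part 2, a more self-contained Part 1, and a slightly heavier but valid treatment of non-overlap.
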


\begin{proof}
\leavevmode
\begin{enumerate}
\item This result was proved during the proof of Lemma 2 of \cite{BR}.

\item By definition, $v$ has $\gamma$ occurrences of the letter $a$, which correspond to the numbers $0,1,\ldots,\gamma-1$ in the 
standard cyclic form of $\sigma$, and $\rho$ occurrences of the letter $c$, which correspond to the numbers $\gamma,
\gamma+1,\ldots,N-1$ in it.
Likewise, by
1, $v_n$ has $\gamma-i$ occurrences of letter $a$, $i$ occurrences of letter $b$ and $\rho-i$ occurrences of letter $c$; they 
correspond, in the standard cyclic form of $\sigma_n$,
to the numbers $0,1,\ldots,\gamma-i-1$ for $a$, $\gamma-i,\ldots, \gamma-1$ for $b$, and $\gamma,\ldots,N-i-1$ for $c$.

We know that $\sigma$ is translation by $\rho$ in $\Z/N\Z$: $\sigma(x)=x+\rho$ in $\Z/N\Z$, and in particular $\sigma^e(0)=e\rho$. Since $\rho$ is the opposite of $\gamma$, we have $\gamma^*\rho\equiv \modd{-1} {N}$; hence 
$\sigma^{j\gamma^*}(0)\equiv \modd{N-j} {N}$ and $\sigma^{j\gamma^*-1}(0)\equiv \modd{\gamma-j}  {N}$. Now by 1, the 
standard cyclic form of $\sigma_n$ is 
obtained by removing the numbers $N-j$, $j=1,\ldots,i$ from the cyclic form of $\sigma$. It follows that $v_n$ is obtained from $v$ by first
removing the letters $a_{j\gamma^*}$, which are all equal to $c$ (since for $j=1,\ldots i$, $N-j=N-1,\ldots,N-i=\gamma+\rho-i\geq\gamma$, since $i\leq\gamma<\rho$); and then
by replacing the letters $a_{j\gamma^*-1}$ (which are all equal to $a$ since for $j=1,\ldots,i$, $\gamma-j=\gamma-1,\ldots,
\gamma-i$) by $b$. 

Since these factors are all equal to $ac$, they cannot overlap.
\end{enumerate}
\end{proof}

\begin{corollary}\label{vv} For $i=1,\ldots, \gamma$, let $d_i$ denote the number of $j\leq i$ such that $j\gamma^*\bmod 
N$ is smaller than $i\gamma^*\bmod N$. Then one has   $v_{n+1}\xrightarrow{i\gamma^* \bmod N-d_i}v_n$ for
$n=N-2,\ldots,N-\gamma-1$.
\end{corollary}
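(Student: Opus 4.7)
The plan is to deduce Corollary \ref{vv} directly from Lemma \ref{restriction0}(2) by expressing both $v_n$ and $v_{n+1}$ as modifications of the base word $v = v_{N-1}$ and tracking the single additional replacement that converts $v_{n+1}$ into $v_n$.

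First I would invoke Lemma \ref{restriction0}(2) twice, with $i' = N-1-n$ and $i'' = N-1-(n+1) = i-1$. This yields that $v_n$ is obtained from $v$ by replacing the factors $a_{j\gamma^*-1}a_{j\gamma^*} = ac$ by the letter $b$ for $j = 1,\ldots,i$, while $v_{n+1}$ is obtained by doing the same only for $j = 1,\ldots,i-1$. The lemma also guarantees that these factors are pairwise non-overlapping in $v$, so the order of the replacements is immaterial. Consequently $v_n$ arises from $v_{n+1}$ by exactly one further replacement, namely the one corresponding to $j = i$, whose location in $v$ is the pair of positions $(p_i - 1, p_i)$ with $p_j := j\gamma^* \bmod N$.

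Next I would compute where this target factor sits in $v_{n+1}$ rather than in $v$. Each of the $i-1$ earlier replacements removes one letter, and it shifts the position $p_i$ leftward by one exactly when the replaced factor lies strictly to the left of position $p_i - 1$, i.e. when $p_j < p_i$. Non-overlap forces $p_j \notin \{p_i - 1, p_i\}$ for $j \neq i$, so the count is unambiguous. It is exactly $d_i$: the definition allows $j = i$, but that case gives $p_j = p_i$, not $p_j < p_i$, so $d_i$ really tallies $j \in \{1,\ldots,i-1\}$ with $p_j < p_i$. Therefore in $v_{n+1}$ the target factor occupies positions $(p_i - d_i - 1, p_i - d_i)$, still spelling $ac$, because those two letters of $v$ were not themselves altered by any earlier replacement. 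Invoking the definition of $\xrightarrow{h}$, with $h$ denoting the index of the $c$, we conclude $v_{n+1} \xrightarrow{p_i - d_i} v_n$, which is the stated relation.

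I expect no serious obstacle: the content of the proof is purely combinatorial bookkeeping. The only mild check is that $h_i := p_i - d_i$ is a valid index in $v_{n+1}$ (length $N - i + 1$), i.e.\ $1 \leq h_i \leq N - i$. The lower bound uses that $p_1,\ldots,p_{i-1}$ are distinct elements of $\{1,\ldots,N-1\}\setminus\{p_i\}$, so at most $p_i - 1$ of them can be less than $p_i$; the upper bound uses that at most $N - 1 - p_i$ of them can exceed $p_i$, forcing $d_i \geq i + p_i - N$. Both are immediate from the coprimality $\gcd(\gamma^*,N) = 1$ (which also ensures $p_j \neq 0$ for $j = 1,\ldots,\gamma$, so the factors $a_{p_j-1}a_{p_j}$ are well-defined without wrap-around). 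The substantive step, translating an index in $v$ into an index in $v_{n+1}$ via $d_i$, is precisely what the middle paragraph records.
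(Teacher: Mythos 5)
Your proposal is correct and follows the same route as the paper: apply Lemma \ref{restriction0}(2) to identify the single extra replacement (the one for $j=i$, located at positions $(i\gamma^*\bmod N)-1$ and $i\gamma^*\bmod N$ in $v$) and then correct the index by the number $d_i$ of earlier removals occurring to its left. The paper states this in two sentences; your version merely makes the non-overlap and index-range bookkeeping explicit, which is a faithful expansion rather than a different argument.
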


\begin{proof} By Lemma \ref{restriction0}, for $i=1,\ldots,\gamma$, one has $v_{i-1}\xrightarrow{e_i}v_i$ for some value of $e_i$, 
which is the position of the $c$ being removed. If one takes into account the previous removals, one obtains the the indicated 
value of $e_i$.
\end{proof}

\begin{proof}[Proof of Proposition \ref{uu}]
We prove it in the case $|w_\nu''|<|w_\nu'|$ (the other case is similar). It is known (Berstel and de Luca, see Corollary 14.5.6 in 
\cite{Re}) that 
$q^*=|w_\nu''|, r^*=|w_\nu'|$. Hence $q^*<r^*$ (recall that $q^*$ is the inverse of $q$ modulo $N$, and that we identify $\Z/N\Z$ and $
[N]=\{0,1,\ldots,n-1\}$).

By construction, $u_{N-1}$ (which corresponds to $i=0$) is the perfectly clustering word over the alphabet $\{a<b<c\}$ 
associated with the composition $(|
w''_\nu|,0,|w'_\nu|)$. In other words, $u_{N-1}$ is the lower Christoffel word over the alphabet $\{a,c\}$ associated with the composition $
(\gamma,\sigma)=(q^*,r^*)$. With the notations of Lemma \ref{restriction0}, we have $u_n=v_n$, and since $\gamma^*=q$, Corollary \ref{vv} implies the proposition, with $h_i$ defined in Proposition \ref{FF}.
\end{proof}

\section{Proof of Theorem \ref{main}, second part, and  Corollary \ref{VV}}

Let $w=w_\nu$. Consider the substitution $\phi$ that maps 
\begin{itemize}
    \item 
$a$ onto $-|w'|_1$,
\item $b$ onto $|w''|_1-|w'|_1$, 
\item $c$ onto $|w''|_1$.
\end{itemize} 
Since $a<b<c$ and $-|w'|_1<|w''|_1-|w'|_1<|w''|_1$, the map $\phi$ maps  
perfectly clustering words onto perfectly clustering words, and preserves the associated composition. Moreover, for any words 
$u,u'\in\{a,b,c\}^*$ and $U,U'\in \{-|w'|_1,|w''|_1-|w'|_1,|w''|_1\}^*$, one has 
\begin{equation}
 u'\xrightarrow{h}u, U'\xrightarrow{h}U, \phi(u')=U' \Rightarrow \phi(u)=U. 
 \label{star}
 \end{equation}
The number of rows of $G_n$ is $k_i=n+1$, with $i=N-1-n$. 
Letting 
$U_n=D(G_n)$, we have for $n=N-2,\ldots, |w_{\nu-1}|-1$:
$U_{n+1}\xrightarrow{h_i}(-1)^{k_i-h_i}U_n$ by Propositions \ref{FF} and \ref{A->D(A)}. Moreover $u_{n+1}\xrightarrow{h_i}u_n$ 
by Proposition \ref{uu}. By the first case of the Theorem, that is $n=N-1$, one has $U_{n-1}=\epsilon \phi(u_{N-1})$.
It follows then by \eqref{star} that $U_n$ is equal to $\phi(u_n)$ multiplied by $\epsilon$ and by $(-1)^t$, where $t=\sum_{1\leq j\leq i}
(k_j-h_j)$. This proves the theorem.

We now prove Corollary \ref{VV}. It is known from \cite{SP} that if $w$ is a perfectly clustering Lyndon words, then $w$ is the 
smallest, and its reversal $\tilde w$, the largest, word in the conjugation class of $w$. If $w=uv$ is the palindromic factorization, 
then clearly $\tilde w=vu$. But it is also known that the position in $w$ of the first letter of $v$ corresponds to the largest digit in the  
standard cyclic form of the symmetric discrete interval exchange associated with $w$ (\cite{FZ} Proof of Theorem 4; a direct 
proof is given in \cite{LaR}, Lemma 2.2). Hence the factor $ac$ in Lemma \ref{restriction0} corresponds to the palindromic factorization of $v$. The corollary follows.

\section{Example}

We take $w_\nu=01101110111$, so that $q=3,r=8$, $w'_\nu=0110111, w''_\nu=0111$. One has $w_{\nu-1}=w'_\nu$, of length 
$7$, and $N=11$ is the length of $w_\nu$.
The value of $n$ varies from $N-1=10$ to $|w_{\nu-1}|-1=6$.
The rows of the matrices $G_{10},G_9,G_8,G_7,G_6$ are, in reverse lexicographic order, the factors of respective lengths 
$10,9,8,7,6$ 
of the given Sturmian sequence, and actually the circular factors of $w_\nu$. In boldface are indicated the entries 1 and 0 of 
$G_{n+1}$ corresponding to the definition of $G_{n+1}\xrightarrow{h_i} G_n$: the two rows of $G_{n+1}$ with the boldfaced 
entries 
coincide up to these entries. The numbers $h_i$ are equal to $(iq\bmod N)-d_i$; the corrective 
term $d_i$ takes into account the rows that are previously removed. One has, for $i=1,2,3,4$, $iq \bmod N =3,6,9,1$, and 
$d_i=0,1,2,0$, so that $h_i=3,5,7,1$, and $$G_{10}\xrightarrow{3} G_9\xrightarrow{5} G_8
\xrightarrow{7} G_7 \xrightarrow{1} G_6.$$

$$G_{10}=\left(
\begin{array}{cccccccccc}
1&1&1&0&1&1&1&0&1&1 \\
1&1&1&0&1&1&0&1&1&1 \\
1&1&0&1&1&1&0&1&1&\bf 1 \\
1&1&0&1&1&1&0&1&1&\bf 0 \\
1&1&0&1&1&0&1&1&1&0 \\
1&0&1&1&1&0&1&1&1&0 \\
1&0&1&1&1&0&1&1&0&1 \\
1&0&1&1&0&1&1&1&0&1 \\
0&1&1&1&0&1&1&1&0&1 \\
0&1&1&1&0&1&1&0&1&1 \\
0&1&1&0&1&1&1&0&1&1
\end{array}
\right),$$
$$
G_9=
\left(
\begin{array}{cccccccccc}
1&1&1&0&1&1&1&0&1 \\
1&1&1&0&1&1&0&1&1 \\
1&1&0&1&1&1&0&1&1 \\
1&1&0&1&1&0&1&1&1 \\
1&0&1&1&1&0&1&1&\bf 1 \\
1&0&1&1&1&0&1&1&\bf 0 \\
1&0&1&1&0&1&1&1&0 \\
0&1&1&1&0&1&1&1&0 \\
0&1&1&1&0&1&1&0&1 \\
0&1&1&0&1&1&1&0&1
\end{array}
\right),$$
$$G_8=
\left(
\begin{array}{cccccccccc}
1&1&1&0&1&1&1&0 \\
1&1&1&0&1&1&0&1 \\
1&1&0&1&1&1&0&1 \\
1&1&0&1&1&0&1&1 \\
1&0&1&1&1&0&1&1 \\
1&0&1&1&0&1&1&1 \\
0&1&1&1&0&1&1&\bf 1 \\
0&1&1&1&0&1&1&\bf 0 \\
0&1&1&0&1&1&1&0
\end{array}
\right),
$$
$$
G_7=
\left(
\begin{array}{cccccccccc}
1&1&1&0&1&1&\bf 1 \\
1&1&1&0&1&1&\bf 0 \\
1&1&0&1&1&1&0 \\
1&1&0&1&1&0&1 \\
1&0&1&1&1&0&1 \\
1&0&1&1&0&1&1 \\
0&1&1&1&0&1&1 \\
0&1&1&0&1&1&1
\end{array}
\right), \quad
G_6=
\left(
\begin{array}{cccccccccc}
1&1&1&0&1&1 \\
1&1&0&1&1&1 \\
1&1&0&1&1&0 \\
1&0&1&1&1&0 \\
1&0&1&1&0&1 \\
0&1&1&1&0&1 \\
0&1&1&0&1&1
\end{array}
\right).
$$
The word $u=u_{10}$ is the lower Christoffel word over $\{a,c\}$ associated with the composition $(|w_\nu''|,|w_\nu'|)=(4,7)$. Hence $u=acaccaccacc$. One has
$$
u_{10}=ac{\bf ac}caccacc\xrightarrow{3}u_9=acbc{\bf ac}cacc$$
$$\xrightarrow{5}u_8=acbcbc{\bf ac}c \xrightarrow{7}u_7=acbcbcbc\xrightarrow{1}u_6=bbcbcbc.$$

The determinantal vectors are correspondingly, up to a factor $\pm 1$:
$$
(-5,3,{\bf -5,3},3,-5,3,3,-5,3,3)\xrightarrow{3}(-5,3,-2,3,{\bf -5,3},3,-5,3,3)$$
$$\xrightarrow{5}(-5,3,-2,3,-2,3,{\bf -5,3},3)\xrightarrow{7}({\bf-5,3},-2,3,-2,3,-2,3)$$
$$
\xrightarrow{1}(-2,-2,3,-2,3,-2,3),
$$
where the two components which are added are indicated in boldface.

As the reader may verify, the transformations between vectors above correspond to the palindromic factorization, as described in 
Corollary \ref{VV}, and the boldfaced components indicate the palindromic factorization. 

For the subsequent determinantal vectors, of smaller length, one has
$$(-2,{\bf -2,3},-2,3,-2,3)\to (-2,1,{\bf -2,3},-2,3)\to (-2,1,1,{\bf -2,3})$$
$$\to ({\bf -2,1},1,1) \to ({\bf -1,1},1) \to ({\bf 0,1})\to (1).
$$

\section{Continued fractions}\label{cont-frac}

Define the matrix $$P(a)=\left( \begin{array}{cc} a&1\\1&0 \end{array}\right).$$
Define the {\em continuant polynomials} $K_n(x_1,\ldots,x_n)$ by $K_0() = 1$, $K_1(x_1) = x_1$ and the recursion 
$K_n(x_1,...,x_n)=K_{n-1}(x_1,...,x_{n-1})x_n +K_{n-2}(x_1,...,x_{n-2})$ if $n\geq 2$. It is well known that
\begin{equation}\label{continuant-matrice}
P(n_0)\cdots P(n_k)=\left( 
\begin{array} {cc}
K(n_0,\ldots,n_k)&K(n_0,\ldots,n_{k-1})\\
K(n_1,\ldots,n_{k})& K(n_1,\ldots,n_{k-1})
\end{array} 
\right),
\end{equation}
where the index of $K$ is omitted, since it is redundant.
Continued polynomials are used to compute the numerators and denominators of finite continued fractions. One has
$$
[n_0,\ldots,n_k]=\frac{K(n_0,\ldots,n_k)}{K(n_1,\ldots,n_{k})}.
$$
In particular, if a Christoffel word $w$ has slope $[n_0,\ldots,n_k]$, then its length is $$|w|=K(n_0,\ldots,n_k)+K(n_1,\ldots,n_{k}).$$

The following result is essentially well known.
We give a proof for sake of completeness.

\begin{proposition}\label{PPP} Let $w$ be a lower Christoffel word of slope $[n_0,\ldots,n_m]$ on the alphabet $\{0<1\}$, with standard 
factorization $w=w'w''$. Then
$$
P(n_0)\cdots P(n_{m-1}) P(n_m-1)=
\begin{cases}
\left( \begin{array}{cc} |w'|_1&|w''|_1\\|w'|_0&|w''|_0 \end{array}\right) & \text{if $m$ even}; 
\\[.25in]
\left( \begin{array}{cc} |w''|_1&|w'|_1\\|w''|_0&|w'|_0 \end{array}\right) & \text{if $m$ odd}.
\end{cases}
$$
\end{proposition}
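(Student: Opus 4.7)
The plan is to reduce the identity to a direct continuant computation and then identify the resulting matrix entries with the $0$- and $1$-lengths of the two factors $w',w''$ of the standard factorization, using the correspondence between Christoffel words and continued fractions.

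\textbf{Matrix manipulation.} First I would use the elementary identity
$$P(n_m)^{-1}P(n_m-1) \;=\; \begin{pmatrix} 0 & 1 \\ 1 & -n_m \end{pmatrix}\begin{pmatrix} n_m-1 & 1 \\ 1 & 0 \end{pmatrix} \;=\; \begin{pmatrix} 1 & 0 \\ -1 & 1\end{pmatrix}.$$
Rewriting the left-hand side of the proposition as $\bigl[P(n_0)\cdots P(n_m)\bigr]\cdot\bigl(\begin{smallmatrix} 1 & 0\\-1 & 1\end{smallmatrix}\bigr)$, substituting in (\ref{continuant-matrice}), and applying the continuant recurrence in the form $K(n_0,\ldots,n_m)-K(n_0,\ldots,n_{m-1})=K(n_0,\ldots,n_{m-1},n_m-1)$ to both entries of the resulting first column, I obtain
$$P(n_0)\cdots P(n_{m-1})P(n_m-1) \;=\; \begin{pmatrix} K(n_0,\ldots,n_{m-1},n_m-1) & K(n_0,\ldots,n_{m-1}) \\ K(n_1,\ldots,n_{m-1},n_m-1) & K(n_1,\ldots,n_{m-1})\end{pmatrix}.$$

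\textbf{Christoffel interpretation.} Using the length formula $|w|=K(n_0,\ldots,n_k)+K(n_1,\ldots,n_k)$ displayed just before the proposition, together with the fact that the slope of a Christoffel word is a fraction in lowest terms, the lower Christoffel word of slope $[k_0,\ldots,k_\ell]$ has $1$-length $K(k_0,\ldots,k_\ell)$ and $0$-length $K(k_1,\ldots,k_\ell)$. Hence the first column of the matrix above has entries $|u|_1,|u|_0$ where $u$ is the lower Christoffel word of the semi-convergent slope $[n_0,\ldots,n_{m-1},n_m-1]$, and the second column has entries $|v|_1,|v|_0$ where $v$ is the lower Christoffel word of the convergent slope $[n_0,\ldots,n_{m-1}]$.

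\textbf{Identification with $w',w''$.} It remains to show that $\{u,v\}=\{w',w''\}$ with the parity-dependent ordering in the statement: $(w',w'')=(u,v)$ for $m$ even and $(w',w'')=(v,u)$ for $m$ odd. I expect this to be the main obstacle. My approach would be induction on $m$. The base cases $m=0$ (where $w=01^{n_0}$ with standard factorization $01^{n_0-1}\cdot 1$, matching $P(n_0-1)=\bigl(\begin{smallmatrix} n_0-1 & 1\\ 1 & 0\end{smallmatrix}\bigr)$) and $m=1$ are checked by direct computation of the relevant $2\times 2$ products. For the inductive step, I would invoke the recursive Stern-Brocot / Raney construction of Christoffel words: passing from slope $[n_0,\ldots,n_m]$ to $[n_0,\ldots,n_m,n_{m+1}]$ builds the new Christoffel word from the pair $(w',w'')$ of the current one, and this construction interchanges the roles of ``previous convergent'' and ``previous semi-convergent'' in the Stern-Brocot tree. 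This interchange is exactly the column swap that distinguishes the even and odd cases of the statement, so the induction closes.
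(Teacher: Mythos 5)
Your route is genuinely different from the paper's. The paper proves the identity by structural induction on the Christoffel word itself, using that every lower Christoffel word of length at least $2$ is obtained from $01$ by the morphisms $G$ and $\tilde D$ (with $G(0)=0$, $G(1)=01$, $\tilde D(0)=01$, $\tilde D(1)=1$): these morphisms preserve the standard factorization, so the matrix identity for $w=G(u)$ or $w=\tilde D(u)$ follows from the one for $u$ by a single left multiplication, at the cost of tracking how the continued fraction of the slope transforms under $G$ and $\tilde D$ (which is where the parity bookkeeping happens). You instead evaluate the product once and for all: the identities $P(n_m)^{-1}P(n_m-1)=\left(\begin{smallmatrix}1&0\\-1&1\end{smallmatrix}\right)$ and $K(n_0,\ldots,n_m)-K(n_0,\ldots,n_{m-1})=K(n_0,\ldots,n_{m-1},n_m-1)$ are both correct, and together with \eqref{continuant-matrice} and the coprimality of consecutive continuants they correctly identify the two columns as the $(1\text{-length},0\text{-length})$ pairs of the Christoffel words of slopes $[n_0,\ldots,n_{m-1},n_m-1]$ and $[n_0,\ldots,n_{m-1}]$. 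This is cleaner than the paper's computation and explains \emph{why} the entries are continuants of the convergent and the last semi-convergent.

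The caveat is that after this reduction all of the combinatorial content of the proposition sits in your third step, and that step is only gestured at. What you need is: the standard factorization $w=w'w''$ of the lower Christoffel word of slope $[n_0,\ldots,n_m]$ has $\{w',w''\}$ equal to the Christoffel words of slopes $[n_0,\ldots,n_{m-1}]$ and $[n_0,\ldots,n_{m-1},n_m-1]$, with the order determined by the parity of $m$. Modulo your continuant computation this statement is equivalent to the proposition itself, so it cannot simply be waved through. It is true, and your induction on $m$ can be made to work via the Christoffel tree (a node $(u,v)$ carries the word $uv$ with standard factorization $u\cdot v$, and its children are $(u,uv)$ and $(uv,v)$; appending $n_{m+1}$ to the continued fraction means descending $n_{m+1}$ steps in one direction, alternating with the parity of $m$, which is exactly the column swap). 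But as written, ``this construction interchanges the roles of previous convergent and previous semi-convergent'' is a claim, not an argument: you would need to record precisely what the node pair becomes after those descents and verify the parity flip, and also handle the degenerate case $n_m=1$, where $[n_0,\ldots,n_{m-1},0]$ must be read as $[n_0,\ldots,n_{m-2}]$. Alternatively, cite this tree description of the standard factorization directly (it is in Reutenauer's book, in the form of the Borel--Laubie/Stern--Brocot correspondence); with that reference your proof is complete and arguably more illuminating than the paper's.
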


\begin{proof} We use the fact that the set of lower Christoffel words of length $\geq 2$ is the smallest set containing $01$ and 
closed under the two substitutions $G,\tilde D$, where $G(0)=0,G(1)=01,\tilde D(0)=01,\tilde D(1)=1$ (see \cite{L} or \cite {Re}). 
Moreover, these substitutions 
preserve lower Christoffel words, and hence the standard factorization.

The result is clear for $w=01$, of slope $1=[1]$, of standard factorization $w=0\cdot 1$, because $P(0)=\left( \begin{array}{cc} 
0&1\\1&0 \end{array}\right)$, and since we 
are here in the even case.

Suppose now that the result is true for $u$, of slope having the continued fraction expansion $[n_0,\ldots,n_m]$. It is enough to 
prove the proposition for $w=G(u)$ and $w=\tilde D(u)$. We do it in the first case, the second one is similar and somewhat easier 
Thus we assume that $w=G(u)$. This implies that $w'=G(u'),w''=G(u'')$, and that
$$
|w|_0=|u|_0+|u|_1, |w|_1=|u|_1,
$$
and similarly for $w',w''$.
Denote by $s(w)$ the slope of $w$. Then 
$
\frac{1}{s(w)}=\frac{|w|_0}{|w|_1}=\frac{|u|_0+|u|_1}{|u|_1}=1+\frac{1}{s(u)}=1+[0,n_0,\ldots,n_m]$.
If $n_0>0$, then $1+\frac{1}{s(u)}=[1,n_0,\ldots,n_m]$ and 
$s(w)$ has the continued fraction expansion $[0,1,n_0,\ldots,n_m]$. If $n_0=0$, then $\frac{1}{s(w)}
=1+\frac{1}{s(u)}=1+
[n_1,\ldots,n_m]=[n_1+1,\ldots,n_m]$, hence $s(w)$ has the continued fraction expansion $[0,n_1+1,\ldots,n_m]$. Note that in both cases, the continued fractions of the slopes of $u$ and $w$ have the same length parity.

Suppose that $m$ is even (the odd case is similar). Then by induction
$$
P(n_0)\cdots P(n_{m-1}) P(n_m-1)=\left( \begin{array}{cc} |u'|_1&|u''|_1\\|u'|_0&|u''|_0 \end{array}\right).
$$
Suppose that $n_0>0$; noting that $P(0)$ is the matrix of the transposition $(1,2)$, we compute
\begin{align*} 
&P(0)P(1)P(n_0)\cdots P(n_{m-1}) P(n_m-1)\\
&\quad=\left(\begin{array}{cc} 1& 0 \\ 1 & 1 \end{array}\right)P(n_0)\cdots P(n_{m-1}) P(n_m-1)\\
&\quad =\left(\begin{array}{cc} 1& 0 \\ 1 & 1 \end{array}\right)
 \left( \begin{array}{cc} |u'|_1&|u''|_1\\|u'|_0&|u''|_0 \end{array}\right)\\
&=\left( \begin{array}{cc} |u'|_1&|u''|_1\\|u'|_0+|u'|_1&|u''|_0+|u''|_1 \end{array}\right)=\left( \begin{array}{cc} |w'|_1&|w''|_1\\|w'|_0&|w''|_0 \end{array}\right),
\end{align*}
which settles this case. Suppose that $n_0=0$; we compute
\begin{align*}
&P(0)P(n_1+1)\cdots P(n_{m-1}) P(n_m-1) \\
&\quad =\left(\begin{array}{cc} 1& 0 \\ n_1+1 & 1 \end{array}\right)P(n_2)\cdots P(n_{m-1}) 
P(n_m-1)\\
&\quad =\left(\begin{array}{cc} 1& 0 \\ 1 & 1 \end{array}\right) \left(\begin{array}{cc} 1& 0 \\ n_1 & 1 \end{array}\right)
P(n_2)\cdots P(n_{m-1}) P(n_m-1)\\
&\quad =\left(\begin{array}{cc} 1& 0 \\ 1 & 1 \end{array}\right)P(0)P(n_1)
P(n_2)\cdots P(n_{m-1}) P(n_m-1)\\
&\quad =\left(\begin{array}{cc} 1& 0 \\ 1 & 1 \end{array}\right) \left( \begin{array}{cc} |u'|_1&|u''|_1\\|u'|_0&|u''|_0 \end{array}\right),
\end{align*}
and we conclude as previously.
\end{proof}

Let $g$ be a Sturmian sequence whose slope $s$ has the infinite continued fraction expansion $s=[n_0,n_1,n_2,\ldots]$. The 
slopes of the Christoffel words which are factors of $g$ are the {\em semi-convergents} of $s$; that is, the rationals of the form 
$[n_0,\ldots,n_{m-1},h]$, with $m\geq 0$, and $1\leq 
h\leq n_m$. 

After this preparation, we may translate Theorem \ref{main} in terms of continued fractions; we use the notations of this theorem.
The integer $\nu$ is defined by $|w_{\nu-1}|\leq n\leq |w_\nu|-1$. Since the slope of $w_\nu$ is a semi-convergent of $s$, $n$ is equivalently defined by the inequalities
$$m\geq 0, \quad 1\leq h\leq n_m,$$
and
$$K(n_0,\ldots,n_{m-1},h-1)+K(n_1,\ldots,n_{m-1},h-1)\leq n
$$
$$\leq 
K(n_0,\ldots,n_{m-1},h)+K(n_1,\ldots,n_{m-1},h)-1,$$
with the convention that $h-1$ is omitted if $h=1$. Note that $w_\nu$ is the lower Christoffel word of slope $[n_0,\ldots,n_{m-1},h]$. Then we have, by Proposition \ref{PPP},
$$
P(n_0)\cdots P(n_{m-1}) P(h-1)=
\begin{cases}
\left( \begin{array}{cc} |w_\nu'|_1&|w_\nu''|_1\\|w_\nu'|_0&|w_\nu''|_0 \end{array}\right) & \text{if $m$ even};
\\[.25in]
\left( \begin{array}{cc} |w_\nu''|_1&|w_\nu'|_1\\|w_\nu''|_0&|w_\nu'|_0 \end{array}\right) & \text{if $m$ odd}.
\end{cases}
$$

To finish, note that many authors consider, instead of the slope $s$ as defined here, the limit of $|w|_1/|w|$ when $|w|$ tends to infinity, $w$ factor of $g$; we call this limit the {\em Slope} of $g$, denoted $S$. These two numbers are related by
$$S=s/(1+s), s=S/(1-S), S^{-1}=1+s^{-1}.$$
It is easy to deduce that if the continued fraction of $S$ is of the form $[0,a_1,\ldots,a_m,\ldots]$, where the $a_i$ are 
positive integers, then 
$$
s=
\begin{cases}
[0,a_1-1,a_2,\ldots,a_m,\ldots], & \text{if $a_1\geq 2$}; \\
[a_2,\ldots,a_m,\ldots], & \text{if $a_1=1$}.
\end{cases}
$$

\section{Fibonacci}\label{fibo}

The Fibonacci sequence $f$ is the limit of the sequence of finite words $\Phi_\nu$, where $\Phi_1=1$, $\Phi_2=0$, and $
\Phi_\nu=\Phi_{\nu-1}\Phi_{\nu-2}$ for  $\nu\geq 3$ (\cite[\S 7.1]{AS}). The length of $\Phi_\nu$ is $F_\nu$, the Fibonacci number (with 
$F_1=F_2=1$). The slope of $f$, which is the limit of $|\Phi_\nu|_1/\Phi_\nu|_0$, is $(\sqrt{5}-1)/2$.

The corresponding sequence $(w_\nu)_{\nu\in \N}$ of lower Christoffel words is defined by $w_{-2}=0, w_{-1}=1$ and the recursion, for any natural $\nu$ 
\begin{equation}\label{Fibow}
w_\nu=\begin{cases} 
	      w_{\nu-1}w_{\nu-2} , 	 & \text{if} \,\,  \nu \,\, \text{even};  \\
	       w_{\nu-2}w_{\nu-1},        &  \text{if} \,\, \nu \,\, \text{odd}.
         \end{cases}
\end{equation} 
Indeed, it is verified that, computing in the free group on $0,1$, one has $w_\nu=0\Phi_{\nu+3}0^{-1}$ if $\nu$ is even, and $\tilde 
w_\nu=1\Phi_{\nu+3}1^{-1}$ if $\nu$ is odd; so that the limit of $|\Phi_\nu|_1/\Phi_\nu|_0$ is equal to that of $|w_\nu|_1/w_\nu|_0$.
One clearly has $|w_\nu|= F_{n+3}$. Moreover, one has $|w_\nu|_0=F_{\nu+2}, |w_\nu|_1=F_{\nu+1}$, as is easily verified.

Consider the $n$-th determinantal vector $V_n$ of $f$. Then the $\nu$ in Theorem \ref{main} is defined by $F_{\nu+2}\leq n\leq F_{\nu+3}-1$; 
consequently $i=F_{\nu+3}-1-n$. 

Since the standard 
factorization of $w_\nu$ is given by (\ref{Fibow}), we obtain that, up to multiplication by $\pm 1$, 
$V_n$ is the perfectly clustering word associated with the composition $(F_{\nu+1}-i,i,F_{\nu+2}-i)$ over the alphabet $\{-F_\nu,-F_{\nu-2},F_{\nu-1}\}$ if $\nu$ is even, and the 
composition $(F_{\nu+2}-i,i,F_{\nu+1}-i)$ over the alphabet $\{-F_{\nu-1},F_{\nu-2},F_\nu\}$ if $\nu$ is odd.

In particular, if $n=F_{\nu+3}-1$, the 
Sturmian determinants of $f$ of order $n$ 
take, up to the sign, only the two values 
$F_\nu,F_{\nu-1}$. And if $F_{\nu+2}\leq 
n<F_{\nu+3}-1$, they take up to the sign only 
the three values $F_\nu,F_{\nu-1},F_{\nu-2}$.

Moreover, since a perfectly clustering word begins with its smallest letter and ends with its largest, and recalling the definition of 
$V_n$ (Section \ref{determ-sturmian}), we see that the determinant of the $n$ lexicographically largest factors of length $n$ of $f$ 
is $\pm F_{\nu-1}$ if $\nu$ is even, and $\pm F_\nu$ if $\nu$ is odd; and for the $n$ smallest, it is 
$\pm F_\nu$ if $\nu$ is even, and $\pm F_{\nu-1}$ if $n$ is odd.

Recall that a factor $u$ of any Sturmian sequence on $\{0,1\}$ is called {\em (right) special} if $u0,u1$ are both factors of the 
sequence. In Proposition \ref{FF}, the description of how one passes from $G_{n+1}$ to $G_n$, and the subsequent proofs, show 
that the determinant of the factors of the sequence of length $n$, without the special factor, is the middle element of of the 
alphabet. Hence, for the Fibonacci sequence, it is $\pm F_{\nu-2}$.

\subsection{Computation of the sign}\label{sign0}

It follows from what precedes that for $n=F_{\nu+3}-1$, $V_n$ is the product of $\epsilon$ by the lower Christoffel word 
associated with the composition $(F_{\nu+1},F_{\nu+2})$ over the alphabet $\{-F_\nu,F_{\nu-1}\}$ if $\nu$ is even, and the 
composition $(F_{\nu+2},F_{\nu+1})$ over the alphabet $\{-F_{\nu-1},F_\nu\}$ if $\nu$ is odd. Here $\epsilon $ is the sign of the 
permutation ``multiplication by $F_{\nu+1}$ in $\Z/F_{\nu+3}$". It is therefore of interest to compute this sign.

Recall that the {\em Lucas numbers} $L_n$ are defined by the same recursion as the Fibonacci numbers, with the initial 
conditions $L_0=2,L_1=1$. 


\begin{proposition}\label{sign} Let $m \geq 3$ be an integer.  Consider the permutation induced by  multiplication by $F_{m-2}$ in $\Z/F_{m}\Z$. The cyclic type of this permutation is 
\begin{itemize}
\item[(a)] $1^{ L_{m/2}}2^{(F_{m} - L_{m/2})/2}$ if $m\equiv \modd{0} {4}$;
\item[(b)] $1^{F_{m/2}}2^{(F_{m} - F_{m/2})/2 }$ if $m\equiv \modd{2} {4}$;
\item[(c)] $1^1 4^{(F_{m} -1)/4 }$ if $m\equiv \modd{1,5} {6}$;
\item[(d)] $1^2 4^{(F_{m}-1)/4}$ if $m\equiv \modd{3} {6}$;
\end{itemize}
Furthermore, if $s_m$ denotes the sign of this permutation, we have 
$s_m = 1$ if $m \equiv \modd{ 1,2,3,4,9,11} {12}$ and
    $s_m = -1$ if $m \equiv \modd{0,5,6,7,8,10} {12}$.
\end{proposition}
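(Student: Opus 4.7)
The natural starting point is Cassini's identity $F_{m-1}^2 - F_m F_{m-2} = (-1)^m$, which yields $F_{m-2}^2 \equiv (-1)^m \pmod{F_m}$. Hence multiplication by $F_{m-2}$ on $\Z/F_m\Z$ has order dividing $2$ when $m$ is even and dividing $4$ when $m$ is odd, so every cycle has length in $\{1,2\}$ or $\{1,2,4\}$ respectively. The number of fixed points equals $\gcd(F_{m-2}-1,\, F_m)$, and since $F_{m-2} \equiv -F_{m-1} \pmod{F_m}$, this equals $\gcd(F_{m-1}+1,\, F_m)$.

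To evaluate this gcd I would invoke the identities $F_{2k-1} + (-1)^{k-1} = F_k L_{k-1}$ and $F_{2k-1} + (-1)^k = F_{k-1} L_k$, both specializations of $F_{a+b} + (-1)^b F_{a-b} = F_a L_b$, together with $F_{2k} = F_k L_k$ and the coprimality of consecutive Fibonacci/Lucas numbers. Writing $m = 2k$: if $k$ is even, then $F_{m-1}+1 = F_{k-1} L_k$ gives $\gcd(F_{m-1}+1, F_m) = L_k = L_{m/2}$; if $k$ is odd, then $F_{m-1}+1 = F_k L_{k-1}$ gives $F_k = F_{m/2}$. For $m$ odd, Cassini yields $F_{m-1}^2 \equiv -1 \pmod{F_m}$, so $(F_{m-1}+1)(F_{m-1}-1) \equiv -2 \pmod{F_m}$, forcing the gcd to divide $2$; it equals $2$ exactly when $F_m$ is even, i.e.\ when $3 \mid m$.

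For odd $m$ I also need to rule out proper 2-cycles. A 2-cycle $x$ would require $F_{m-2}^2 x \equiv x$, i.e., $2x \equiv 0 \pmod{F_m}$. When $F_m$ is odd the only solution is $x = 0$, already fixed; when $3 \mid m$, so $F_m$ is even, a direct parity check confirms that the other solution $x = F_m/2$ is also fixed, using that $F_{m-2}$ is odd for $m \equiv 3 \pmod 6$. Hence all non-fixed elements lie in 4-cycles, yielding the stated cycle-type enumerations.

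For the sign I use $\sgn = (-1)^{F_m - c}$ with $c$ the total number of cycles. In the even case this becomes $(-1)^{(F_m - f_m)/2}$ with $f_m \in \{L_{m/2}, F_{m/2}\}$; in the odd case it becomes $(-1)^{(F_m - f_m)/4}$ after absorbing the factor $3$ in the exponent. The final step is a twelve-case verification, using that the Pisano period of $F_n$ modulo $8$ is $12$ and that of $F_n, L_n$ modulo $4$ is $6$, so all relevant parities depend only on $m \bmod 12$. I expect the main obstacle to be the subcase $m \equiv 0 \pmod{12}$, where both $F_{m/2}$ and $L_{m/2}$ are even, so that the parity of $L_{m/2}(F_{m/2}-1)/2$ must be extracted by computing $L_{m/2}$ and $F_{m/2}-1$ modulo $4$ rather than modulo $2$. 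A table running over the twelve residues then confirms the stated sign pattern.
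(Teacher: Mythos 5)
Your argument is correct, and while it shares the paper's overall skeleton --- bound the cycle lengths via $F_{m-2}^2\equiv(-1)^m\pmod{F_m}$, count the fixed points, rule out genuine $2$-cycles for odd $m$ (including the spurious one at $x=F_m/2$ when $m\equiv 3\pmod 6$, which you handle exactly as the paper does), then reduce the sign $(-1)^{F_m-c}$ to a residue check modulo $12$ --- you carry out the fixed-point count by a genuinely different and more uniform route. The paper counts fixed points using the case-specific factorizations $F_{4k-2}-1=L_{2k}F_{2k-2}$, $F_{4k}-1=L_{2k-1}F_{2k+1}$, $F_{2n}=F_nL_n$ together with coprimality arguments, and for odd $m$ it requires a separate gcd lemma (Lemma~\ref{gcdlem}) proved by exhibiting three ad hoc B\'ezout-type integer combinations. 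You instead observe once that the number of fixed points equals $\gcd(F_{m-2}-1,F_m)=\gcd(F_{m-1}+1,F_m)$, evaluate this for even $m$ from the single identity $F_{a+b}+(-1)^bF_{a-b}=F_aL_b$ plus coprimality of consecutive Fibonacci (resp.\ Lucas) numbers, and for odd $m$ deduce $\gcd\mid 2$ directly from $(F_{m-1}+1)(F_{m-1}-1)\equiv-2\pmod{F_m}$; this replaces the paper's lemma entirely and is the cleaner argument. The closing twelve-residue verification is only sketched, but so is the paper's (``it is easy to see\dots''), and you correctly flag the one delicate subcase $m\equiv 0\pmod{12}$, where mod-$4$ information on $L_{m/2}$ is needed. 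One further point in your favour: working with $(F_m-f_m)/4$ and $f_m=2$ gives the exponent $(F_m-2)/4$ in case (d), which is what the paper's own proof uses; the exponent $(F_m-1)/4$ printed in the statement of case (d) is not an integer and is evidently a typo that your computation silently corrects.
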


Note that $s_n=\legendre{F_{m-2}}{F_m}_Z$, the Zolotareff symbol.
We begin by a lemma.
\begin{lemma}
\leavevmode
\begin{itemize}
\item[(a)] $\gcd(F_{6k+1} -1, F_{6k+3}) = 2$.
\item[(b)] $\gcd(F_{6k+3} - 1, F_{6k+5}) = 1$.
\item[(c)] $\gcd(F_{6k-1} - 1, F_{6k+1}) = 1$.
\end{itemize}
\label{gcdlem}
\end{lemma}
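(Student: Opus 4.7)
The plan is to handle all three parts in a uniform way using Cassini's identity $F_{m+1}^2 - F_m F_{m+2} = (-1)^m$, together with the standard fact that the Fibonacci sequence modulo $2$ has period $3$, so that $F_n$ is even precisely when $3\mid n$.

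In each part let $d$ denote the gcd on the left-hand side, and let $m$ be the smaller of the two Fibonacci indices appearing (so $m=6k+1$ in (a), $m=6k+3$ in (b), and $m=6k-1$ in (c)). The two indices always differ by exactly $2$, so $d$ divides both $F_m-1$ and $F_{m+2}$, which means
$$F_m\equiv 1,\quad F_{m+2}\equiv 0\pmod{d},\quad\text{and hence}\quad F_{m+1}=F_{m+2}-F_m\equiv -1\pmod{d}.$$
Substituting these three consecutive values into Cassini's identity gives $(-1)^2 - 1\cdot 0 \equiv (-1)^m\pmod{d}$, and $m$ is odd in every case, so $2\equiv 0\pmod{d}$; that is, $d\mid 2$.

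A short parity check then finishes each case. In (a), $F_{6k+1}$ is odd while $F_{6k+3}$ is even, so $F_{6k+1}-1$ and $F_{6k+3}$ are both even, forcing $2\mid d$ and therefore $d=2$. In (b), $F_{6k+5}$ is odd (since $3\nmid 6k+5$), so $d$ must itself be odd, giving $d=1$. In (c), $F_{6k+1}$ is odd for the same reason, so again $d=1$.

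The whole argument reduces to a two-line computation once one sees that Cassini's identity applies directly, since the two indices in each gcd differ by exactly $2$; I do not anticipate any real obstacle.
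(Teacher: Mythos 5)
Your proof is correct, and it takes a genuinely different and more uniform route than the paper's. The paper handles the three cases with three separate bespoke identities --- $(F_{2n+1}-1)(F_{2n}-2)-F_{2n+3}(F_{2n-2}-1)=4$, $(F_{2n+1}-1)(F_{2n+2}-1)-F_{2n+3}(F_{2n}-1)=2$, and $F_{2n+3}F_{2n-1}-(F_{2n+1}-1)(F_{2n+1}+1)=2$ --- and in each case uses the parity of the auxiliary Fibonacci/Lucas-type factors to divide the right-hand side by $2$, thereby exhibiting $2$ (resp.\ $1$) as an explicit integral combination of the two arguments of the gcd. You instead reduce everything modulo $d=\gcd(F_m-1,F_{m+2})$: from $F_m\equiv 1$ and $F_{m+2}\equiv 0$ you get $F_{m+1}\equiv -1 \pmod d$, and a single application of Cassini's identity $F_{m+1}^2-F_mF_{m+2}=(-1)^m$ with $m$ odd yields $1\equiv -1\pmod d$, i.e.\ $d\mid 2$, uniformly in all three cases; the period-$3$ behaviour of the Fibonacci sequence modulo $2$ then decides whether $d$ is $1$ or $2$. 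Your argument is shorter and avoids having to discover and verify three separate polynomial identities in Fibonacci numbers; what the paper's version buys in exchange is explicit B\'ezout coefficients for each gcd, which is slightly more constructive but not needed anywhere else in the paper. Both arguments ultimately rely on the same parity facts about $F_n \bmod 2$.
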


\begin{proof}[Proof of Proposition \ref{sign}]
\leavevmode
\begin{itemize}
\item[(a)] If $n \equiv \modd{0} {3}$ then
both $F_{2n+1} -1$ and $F_{2n+3}$ are even, so they have a factor $2$ in common.   To see that
$\gcd(F_{2n+1} -1, F_{2n+3}) = 2$,
observe that
$(F_{2n+1} - 1)(F_{2n} - 2) - F_{2n+3}(F_{2n-2} - 1) = 4$
for all $n$.  Furthermore, if $n \equiv \modd{0} {3}$,  then $F_{2n}$ is even and $F_{2n-2}$ is odd, so we can divide both $F_{2n} - 2$ and $F_{2n-2}-1$  by $2$ to get $2$
as an integral linear combination of $F_{2n+1} - 1$ and $F_{2n+3}$.  Thus this identity holds for $n = 6k+1$.

\item[(b)] It is easy to check 
$(F_{2n+1} - 1)(F_{2n+2} -1) - F_{2n+3}(F_{2n} - 1) = 2$
for all $n$.  If $n \equiv \modd{1} {3}$, then both $F_{2n+2}$ and $F_{2n}$ are odd, so we can divide both $F_{2n+2} - 1$ and $F_{2n} - 1$
by $2$ to get an integral linear combination of $F_{2n+1}-1$ and $F_{2n+3}$ that equals $1$.
Thus this identity holds for $n=6k+3$.

\item[(c)]  It is easy to check that
$F_{2n+3} F_{2n-1} - (F_{2n+1} - 1)(F_{2n+1} + 1) = 2$
for all $n$.   If $n \equiv \modd{2} {3}$ then $F_{2n-1}$ is even and $F_{2n+1}$ is odd, so we can divide
both $F_{2n-1}$ and $F_{2n+1} + 1$ by $2$ to get an integral linear combination of 
$F_{2n+1} - 1$ and $F_{2n+3}$ that equals $1$.   Thus this identity holds for $n =6k-1$.
\end{itemize}
\end{proof}

\begin{proof}
Let $s_m$ denote the sign of the permutation induced by multiplication by $F_{m-2}$ in $\Z/F_{m}\Z$.

\bigskip\noindent{\it Case 1:} 
$m$ is even.

From the classical Fibonacci number identity $F_i^2 = F_{i-2} F_{i+2} + (-1)^i$, valid for all $i$,
we see that if $i$ is even then
$F_i^2 = F_{i-2} F_{i+2} + 1$ and so
$F_i^2 \equiv \modd{1} {F_{i+2}}$.
Setting $i= m-2$ shows that multiplication by $F_{m-2}$ induces cycles of size either $1$ or $2$.

The number $a$ will be in a cycle of length $1$ iff  $aF_{m-2} = \modd{a} {F_{m}}$,
or in other words $a(F_{m-2} - 1) \equiv \modd{0} {F_{m}}$.
Now there are two cases, namely (a) $m=4k$, and (b) $m = 4k+2$. 

\bigskip\noindent{\it Case 1a:}  $m = 4k$.

We use the identities
\begin{align*}
F_{4k-2} - 1 &= L_{2k} F_{2k-2}\\
F_{4k} &= F_{2k} L_{2k}.
\end{align*}

Suppose $a(F_{4k-2} - 1) \equiv \modd{0} {F_{4k}}$.  Then
\begin{align*}
a L_{2k} F_{2k-2} &= a(F_{4k-2} - 1)\\
		  &\equiv \modd{0} {F_{4k}}\\
        &  \equiv \modd{0} {F_{2k} L_{2k}},
\end{align*}
so cancelling $L_{2k}$ gives the congruence
$a F_{2k-2} \equiv \modd{0} {F_{2k}}$.
But $F_{2k-2}$ is relatively prime to $F_{2k}$, so it must be that 
$a$ is actually a multiple of $F_{2k}$ and every such multiple works.
There are $F_{4k}/F_{2k}  = L_{2k}$ such multiples
in the numbers mod $F_{4k}$.  So there are $L_{2k}$ cycles of length $1$ and $(F_{4k} - L_{2k})/2$ cycles of length $2$.
The sign of the permutation is then $s_m = (-1)^e$ where
$e \equiv \modd{(F_{4k} - L_{2k})/2} {2}$.   It is easy to see that
$e=0$ iff $k \equiv \modd{0} {3} $ and $e=1$ otherwise.
So $s_m = 1$ if $m\equiv \modd{4} {12}$ and $-1$ if $m\equiv \modd{0,8} {12}$.

\bigskip\noindent
{\it Case 1b:}  $m=4k+2$.

We use the identities
\begin{align*}
F_{4k} - 1 &= L_{2k-1} F_{2k+1}\\
F_{4k+2} &= F_{2k+1} L_{2k+1} .
\end{align*}
As above,
\begin{align*}
a L_{2k-1} F_{2k+1} &= a (F_{4k} - 1) \\
                    &\equiv \modd{0} {F_{4k+2}} \\
                    &\equiv \modd{0} {F_{2k+1} L_{2k+1}},
\end{align*}
and so cancelling $F_{2k+1}$ from this congruence we see
$a L_{2k-1} \equiv \modd{0} {L_{2k+1}}$.
But $L_{2k-1}$ is relatively prime to $L_{2k+1}$, so
it must be that $a$ is a multiple of $L_{2k+1}$, and every such multiple works.
There are  $F_{4k+2} /L_{2k+1} = F_{2k+1}$ such multiples in
the numbers mod $F_{4k+2}$.
So there are $F_{2k+1}$ cycles of length $1$ and 
$(F_{4k+2} - F_{2k+1})/2$ cycles of length $2$.

The sign of the permutation is then $s_m = (-1)^e$, where
$e \equiv \modd{(F_{4k+2} - F_{2k+1})/2} {2}$.
It is easy to see that $e = 0$ if $k \equiv \modd{0} {3}$
and $e=1$ otherwise.
So now we see $s_m = 1$ if $m\equiv \modd{2} {12}$, 
and $-1$ if $m\equiv \modd{6,10} {12}$.

\bigskip\noindent{\it 
Case 2:}  $m$ is odd.

We use the identity
$F_{m-2}^4 - 1 = F_{m-4} F_{m-3} F_{m-1} F_m$,
valid for all $m$.
Then $aF_{m-2}^4 = \modd{a} {F_{m}}$, so it is clear that there can only
be cycles of length $1$, $2$, or $4$.

We now show there can never be any cycles of length $2$.
If there were, as above, we would have
$a(F_{m-2}^2 - 1) \equiv \modd{0} {F_{m}}$
But since $m$ is odd we have
$F_{m-2}^2 - 1 = F_{m-4} F_{m} - 2$,
so
\begin{align*}
a(F_{m-2}^2 - 1) &= a(F_{m-4} F_{m} - 2) \\
             &\equiv \modd{-2a} {F_{m}}\\
             &\equiv \modd{0} {F_{m}},
\end{align*}
which forces $F_{m}$ to be even, which in turn forces $m= 6k+3$ and
$a = F_{6k+3}/2$.   Now $F_{6k+1}$ is odd, so $F_{6k+1} \equiv 
\modd{1} {2}$.
Multiplying through by $a$ gives
$aF_{6k+1} \equiv \modd{a} {F_{6k+3}}$, so in fact this supposed cycle of size $2$ is actually of size $1$.

\bigskip
\noindent{\it 
Case 2a:}  $m=6k+1$.  Except for $0$, the only cycles are of length $4$,
and there are $(F_{6k+1} -1)/4$ of them.

From above it suffices to show there are no cycles of length $1$.
If there were we would have
$a F_{6k-1} = \modd{a} {F_{6k+1}}$ so
$a(F_{6k-1} - 1) \equiv \modd{0} {F_{6k+1}}$.
But $\gcd(F_{6k-1} - 1, F_{6k+1}) = 1$ by Lemma~\ref{gcdlem} (c), which forces $a \equiv 0$, impossible.
So $s_m = (-1)^e$ where $e \equiv \modd{(F_{6k+1}-1)/4} {2}$.  It is easy to check 
$s_m = 1$ if $k \equiv\modd{1} {2}$ and $s_m =-1$ otherwise.  So we get
$s_m = -1$ for $m \equiv \modd{7} {12}$ 
and $s_m = 1$ for $m \equiv \modd{1} {12}$. 

\bigskip\noindent{\it 
Case 2b:}  $m=6k+3$.  

We will see there are exactly two cycles of length $1$;
the rest therefore have to be of length $4$.
Suppose $a F_{6k+1} \equiv \modd{a} {F_{6k+3}}$.  Then
$a(F_{6k+1} - 1) \equiv \modd{0} {F_{6k+3}}$.
But $\gcd(F_{6k+1} -1), F_{6k+3}) = 2$ by
Lemma~\ref{gcdlem} (a),
so the only possibilities are
$a = 0$ or $a= F_{6k+3}/2$.   Thus 
$s_m = (-1)^e$ where $e \equiv \modd{(F_{6k+3}-2)/4} {2}$.  It is now easy to check 
$s_m = 1$ in all cases.

\bigskip\noindent{
Case 2c:}  $m = 6k+5$.   

Except for $0$,
there are only cycles of length $4$,
and there are $(F_{6k+5} -1)/4$ of them.
Suppose there is another cycle of length $1$.  Then
we would have
$a F_{6k+3} = \modd{a}  {F_{6k+5}}$ so
$a(F_{6k+3} - 1) \equiv \modd{0} {F_{6k+5}}$.
But by Lemma~\ref{gcdlem} (b), we know that $\gcd(F_{6k+3} - 1, F_{6k+5}) = 1$, which forces $a \equiv 0$, impossible.

So $s_m = (-1)^e$ where $e \equiv \modd{F_{6k+5}-1)/4} {2}$.  It is easy to check 
$s_m = 1$ if $k \equiv \modd{1} {2}$ and $-1$ otherwise.  So we get
$s_m = -1$ for $n \equiv \modd{3} {12}$ and $s_m = 1$ for 
$m = \modd{9} {12}$.

Summing up, we have
$$ s_m = \begin{cases}
1, & \text{if $m \equiv \modd{ 1,2,3,4,9,11} {12}$}; \\
-1, & \text{if $m \equiv \modd{0,5,6,7,8,10} {12}$}.
\end{cases}$$
\end{proof}

\section{Postscript}
Most of the work presented in this article was done between March and June 2024.  

As we were putting the finishing touches on this paper  in mid-September 2024, we were informed about the interesting new paper of Luca Zamboni \cite{Za}, done independently, which contains some of the same results.

\end{document}